\documentclass[conference]{IEEEtran}
\IEEEoverridecommandlockouts

\usepackage{graphicx}
\graphicspath{{images/}}
\usepackage{subfig}
\usepackage{cite}
\usepackage{algorithm}
\usepackage{amsmath,amssymb,amsfonts,amsthm}
\usepackage{algpseudocode}
\usepackage{textcomp}
\usepackage{xcolor}
\usepackage{booktabs}
\usepackage{cleveref}
\usepackage{url} 
\def\BibTeX{{\rm B\kern-.05em{\sc i\kern-.025em b}\kern-.08em
    T\kern-.1667em\lower.7ex\hbox{E}\kern-.125emX}}

\newcommand\lambdagap{\lambda_{\text{gap}}}
\newtheorem{proposition}{Proposition}

\begin{document}

\title{Support Discovery With Iteratively Reweighted Least Squares for Fixed-Charge Network Flow\\}
\author{\IEEEauthorblockN{Sindura Saraswathi}
\textit{University of Central Florida}\\
sindura@ucf.edu
\and
\IEEEauthorblockN{Christian Kümmerle}
\textit{University of Central Florida}\\

kuemmerle@ucf.edu}

\maketitle

\pagestyle{plain}
\thispagestyle{plain}

\begin{abstract} 
The fixed-charge network flow problem (FCNFP) couples continuous flow allocation with discrete arc-activation decisions, making it a canonical but computationally challenging model for a variety of network design and resource allocation problems. Exact mixed-integer linear programming formulations capture the fixed-charge structure faithfully, but often become difficult to solve on large networks. We propose a scalable continuous-optimization algorithm for large-scale single-commodity FCNFP based on an iteratively reweighted least-squares (IRLS) framework. The method replaces the discontinuous fixed-charge and linear arc cost objective with a smooth nonconvex Lasry--Lions surrogate and solves a sequence of weighted quadratic flow subproblems. Each subproblem is solved by a warm-started dual semismooth Newton method whose Newton systems have weighted graph-Laplacian structure, enabling the use of modern Laplacian solvers. To further improve the discovered arc supports of the challenging underlying combinatorial problem, we also develop an algorithmic variant that incorporates objective-driven perturbation restarts and an anchor-union restricted search that jointly leverages supports discovered by IRLS and by complementary FCNFP heuristics. Computational experiments on $410$ benchmark, synthetic, and large-scale instances show that our method obtains the best objective quality among the evaluated scalable FCNFP algorithms, with a mean gap of $1.316\%$ to a time-limited MILP reference and a win-or-tie rate of $90.0\%$ among the non-MILP methods.  The results indicate that combining smooth continuous optimization with support-level search is an effective strategy for producing high-quality feasible solutions to large-scale FCNFP. 
\end{abstract}


\section{Introduction}
\label{sec:introduction}
Many problems in AI and operations research couple a discrete activation decision with a continuous allocation: which resources to switch on, and how much to route through them. Large-scale LLM inference is a recent instance, where deploying a serving configuration incurs a setup cost while routing requests through it incurs load-dependent operating cost \cite{cheng2026fast}; the same fixed-plus-variable structure underlies classical network design and resource allocation. What makes such problems hard is not the continuous part, which is often polynomial once the active set is fixed, but the discrete structure that must be discovered.

The fixed-charge network flow problem (FCNFP) provides a canonical model for this structure. Given a directed network, the goal is to route flow satisfying supply-demand balance and capacity constraints while minimizing both proportional flow costs and fixed charges incurred when arcs are activated. FCNFP arises in transportation \cite{sadeghi2019new, hajiaghaei2014integrated, golmohamadi2017solving}, supply-chain design \cite{willet2021waterroute}, and infrastructure planning \cite{bogs2025planning}. FCNFP is NP-hard \cite{guisewite1990minimum}, with its computational difficulty arising from the discrete arc-activation structure. If the set of active arcs were known in advance, the remaining problem would reduce to a linear minimum cost network flow problem, which can be solved efficiently by linear programming methods or specialized solvers in $O(nm\log(n^2/m)\log(n\Gamma))$, where $n$ and $m$ denote the numbers of nodes and arcs and $\Gamma$ bounds the integer arc costs \cite{goldberg1990finding}. 
Exact mixed-integer linear programming (MILP) \cite{murty1968solving, gendron2014branch, ortega2003branch, palekar1990branch} formulations capture this structure faithfully, but they can become prohibitively expensive on large instances. In our experiments, a time-limited OR-Tools \cite{ortools} MILP baseline solved all instances up to $60$ nodes and $400$ arcs, but began failing to prove optimality at $120$ nodes and $1500$ arcs and failed to prove optimality on all $1000$-node, $20,000$-arc instances. 

This motivates a principled continuous optimization approach that preserves the computational advantages of network-flow structure while inducing sparse arc support. This perspective comes from sparse linear inverse problems, where the combinatorial task of support selection is often handled through sparsity-promoting continuous penalties and iterative reweighting schemes \cite{daubechies2004iterative, Daubechies-CPAM2010}.

In this paper, we propose a continuous optimization strategy for large-scale single-commodity FCNFP. The method is based on an iteratively reweighted least-squares (IRLS) \cite{Daubechies-CPAM2010, kummerle2021iteratively} framework that replaces the discontinuous fixed-plus-linear arc cost with a nonconvex surrogate smoothed by Lasry--Lions regularization \cite{simoes2021lasry}.  The smoothed surrogate \cite{Chen2012smoothing} helps stabilize the nonconvex optimization landscape induced by fixed-charge penalties. At each iteration, the method solves a weighted continuous flow subproblem and updates the weights so that arcs carrying small flow are increasingly penalized. This reweighting mechanism encourages small-flow arcs to vanish and guides the continuous flow towards a sparse, high-quality feasible solution without explicitly optimizing over binary activation variables.

Despite the Lasry--Lions smoothing strategy avoiding many suboptimal local minima, the underlying problem is still fundamentally nonconvex, and there are no guarantees available that the discovered support set is optimal. To find even better support sets of the network flow, we propose to augment the base IRLS method with two support-improvement layers. First, objective-driven perturbation restarts modify the incumbent support by suppressing expensive active arcs and opening promising inactive arcs, followed by projection back onto the feasible capacitated-flow polytope. Second, an anchor-union restricted search combines IRLS supports with supports discovered by external FCNFP heuristics, and then reruns IRLS on the resulting restricted subgraphs.

The main contribution of this work is a scalable primal algorithm that bridges smooth continuous optimization and support-level combinatorial search. Unlike exact MILP approaches, the proposed method does not attempt to certify global optimality. Instead, it is designed to produce high-quality feasible solutions on instances where exact methods are too slow or terminate with only feasible, but suboptimal flow vectors. Computational experiments on $410$ benchmark, synthetic, and large-scale instances show that our method achieves the best objective quality among the scalable evaluated FCNFP heuristics. It achieves a mean gap of $1.316\%$ to the MILP reference and, among the non-MILP methods, wins or ties on $90.0\%$ of the instances.

\section{Related Work}
\label{sec:related_work}
\paragraph{Existing FCNFP Solvers.}
FCNFP is a representative problem in the broader class of nonconvex minimum-cost flow problems and is NP-hard.
Exact MILP and branch-and-cut methods can model the FCNFP structure directly, but their scalability is limited by the binary arc-activation variables. This has motivated a large body of approximation and reformulation methods that exploit the underlying network-flow structure.

The dynamic slope scaling procedure (DSSP) \cite{kim1999solution} replaces the fixed-charge objective with a sequence of linear min-cost-flow (MCF) subproblems by converting each arc's fixed cost into a flow-dependent per-unit slope, producing an LP with network-flow constraints. These subproblems can be solved by generic LP solvers or by specialized min-cost-flow algorithms, with cost-scaling variants admitting the same $O(nm\log(n^2/m)\log(n\Gamma))$ bound, but the linearization only locally approximates the discontinuous term, making DSSP sensitive to initialization and prone to poor early support decisions. Related dynamic updating methods refine this idea. ADCUP \cite{nahapetyan2008adaptive} approximates FCNFP by a concave piecewise-linear network-flow problem and updates it dynamically, while \cite{rebennack2009bilinear} give a continuous bilinear formulation that represents the fixed-charge structure without binary variables, at the cost of nonconvexity; \cite{nie2020dynamic} build on this with a smoothing parameter and coupled linear programming subproblems. More recently, \cite{yang2024sequential} proposed the adaptive dynamic slope scaling procedure (ADSSP) with edge deletion, emphasizing the role of support identification and arc elimination in large-scale FCNFP.

Our method is related to these approaches in that it avoids full mixed-integer search and instead solves a sequence of continuous network-flow subproblems. However, our methodology fundamentally differs in several key aspects. First, rather than using slope scaling, which approximates the fixed-charge term with a sequence of linear surrogates, or an exact but nonconvex bilinear reformulation, we implicitly optimize a family of tailored, continuous surrogate objectives derived from a Lasry--Lions smoothing strategy \cite{LasryLions-1986,AttouchAze-1993Approximation,simoes2021lasry} that converge to the original FCNFP objective in the limit. By adjusting the smoothing parameters of this surrogate family iteratively, our method increases the degree of non-convexity over time, allowing for the discovery of high-quality supports. Furthermore, the efficiently solvable subproblems we use are not linear or bilinear, but quadratic, allowing for an appropriate balance between solver efficiency and modeling accuracy.

\paragraph{Smoothing Strategies and IRLS Framework.}
Smoothing has been developed successfully for a variety of nonsmooth optimization problems, especially nonsmooth convex composite optimization \cite{BenTalTeboulle-2006smoothing,Nesterov-2005SmoothMinimization,Beck2012smoothing}. Seen primarily as an algorithmic acceleration device for such problems, smoothing methods can also serve, in nonconvex settings, as a landscape regularization by exposing useful descent directions and delaying premature commitment to poor local supports \cite{Chen2012smoothing,Bian-2013WorstComplexitySmoothing,Yang-2020GraduatedNonConvexity,Kornowski2021oracle}. Specifically, the Lasry--Lions smoothing strategy \cite{LasryLions-1986,AttouchAze-1993Approximation,simoes2021lasry} that we adapt in this work is tailored to high-dimensional, nonconvex problems due to its flexibility involving two smoothing parameters, but does not directly lead to a computationally tractable optimization method \cite{Kornowski2021oracle}. We obtain a tractable algorithm by majorizing the Lasry--Lions surrogates by an adaptive quadratic model, which makes the proposed method an instance of the iteratively reweighted least-squares (IRLS) framework \cite{gorodnitsky_rao,Daubechies-CPAM2010,kummerle2021iteratively}. IRLS methods are well-known tools to solve hard instances of estimation problems with sparse combinatorial structure in computer vision \cite{ochs_dosovitskiy_brox_pock}, signal processing \cite{Daubechies-CPAM2010,wipf_nagarajan}, and machine learning \cite{KummerleMayrinkVerdun-ICML2021}, but have not been well-studied so far for network flow problems.

\paragraph{Efficient Laplacian Solvers.}
The scalability of our method is also tied to recent progress on Laplacian linear-system solvers. Nearly-linear-time algorithms for graph Laplacian and symmetric diagonally dominant systems have become a central tool in theoretical computer science \cite{Spielman-2004nearlypartitioning,Daitch-2008Faster,Teng-2010LaplacianParadigm,Spielman-2014NearlyLinear,Madry-2018ICM}, with best available runtime bounds of  $O(m (\log \log n)^{O(1)} \log(1/\epsilon))$ for finding an $\epsilon$-accurate solution \cite{Jambulapati2025ultrasparse}. While provably fast approximation algorithms have been developed for several network-flow problems \cite{christiano2011electrical, Daitch-2008Faster, karakostas2008faster}, implementable efficient algorithms have been so far only developed for Laplacian system solvers \cite{Kyng2016approximate,Gao-2023Robust,Gao2026ac}, which we leverage in this work.

\section{Problem Formulation}

Let $G=(V,E)$ be a directed network with node set $V$ and edge set $E$. 
Let $n=|V|$ and $m=|E|$. For each edge $e\in E$, let $f_e$ denote the flow on edge $e$, $c_e$ its capacity, $r_e$ its proportional cost, and $b_e$ its fixed cost. Let $C\in \mathbb{R}^{n\times m}$ denote the incidence matrix and let $d\in \mathbb{R}^n$ denote the demand vector, where $d_v>0$ at sink nodes, $d_v<0$ at source nodes, $d_v=0$ at transshipment nodes, with $\sum_v d_v=0$ so that total supply equals total demand. With these definitions, the fixed-charge network flow problem (FCNFP) can be formulated as
\begin{equation}\label{eq:jtrue}
\begin{split}
\min_{f} \quad & \sum_{e\in E} \left(r_e f_e + b_e \mathbf{1}\{|f_e|>0\} \right) =: J_{\mathrm{true}}(f) \\
\text{s.t.}\quad & Cf=d, 0\leq f_e \leq c_e, \qquad \forall e\in E. 
\end{split}
\end{equation}
The key difficulty of FCNFP lies in the combinatorial nature of the fixed-charge cost $b_e \mathbf{1}\{|f_e|>0\}$, which is active if and only if the binary variable $z_e : =  \mathbf{1}\{|f_e|>0\}$ is non-zero, corresponding to the opening of the edge $e$ for a non-zero flow. The set of feasible flow vectors of \eqref{eq:jtrue} is $\mathcal{F} = \left\{f\in \mathbb{R}^m:Cf=d,\;0\leq f \leq c \right\}$.


\subsection{Lasry--Lions Double Envelope Combined Surrogate}\label{sec:lasrylions}
The fixed-charge objective is discontinuous at zero, since an arbitrarily small positive flow activates the fixed charge, and it promotes sparsity, since once an arc carries flow, increasing that flow incurs no further charge. To obtain a tractable continuous formulation we replace the fixed-plus-linear arc cost with a smooth nonconvex surrogate based on the Lasry--Lions double envelope \cite{simoes2021lasry}, which removes the discontinuity while preserving the support-promoting structure.

The surrogate has three regimes and is governed by two smoothing parameters $\lambda>\mu>0$ shared across all edges. The parameter $\lambda$ controls the overall smoothing level, while $\mu$ controls the width of the transition region. Near zero, the surrogate is quadratic, which keeps the IRLS weights finite and the subproblem well defined. The transition region connects the quadratic and active-flow regimes smoothly. For larger flows, the exact envelope's linear tail has derivative $r_e$, so $b_e$ would enter only as an additive constant and vanish from the reweighting rule. We therefore replace the linear tail by a logarithmic term with coefficient $\alpha b_e$, where $\alpha=\lambda/(\lambda+s\lambda_0)$, $\lambda_0$ is the initial smoothing level and $s=10^{-2}$. 
 
For each edge $e$, the first transition point is $x_{1,e} = T_e(\lambda-\mu)/\lambda$, where $T_e=\lambda r_e+\sqrt{2\lambda b_e}$. The second transition point is $x_{2,e} = (S_e + \sqrt{S_e^2 - 4\mu \alpha b_e})/2$, where $S_e = T_e - \mu r_e$. We restrict $0<\mu<\lambda$ by the edge-wise cap $\mu\leq\min_e\mu_{\max,e}(\lambda)$, which makes these transitions real and ordered; the cap and ordering proof are given in supplementary material, \Cref{app:ll_derivation}, \Cref{app:fixed-parameter-majorization}. The resulting per-edge Lasry--Lions surrogate is
\begin{equation}\label{eq:lasrylions}
\mathrm{LL}_e^{\lambda,\mu}(x) =
\begin{cases}
\dfrac{x^2}{2(\lambda-\mu)},
\qquad |x|\leq x_{1,e}, \\[10pt]
\dfrac{T_e^2}{2\lambda} - \dfrac{(|x|-T_e)^2}{2\mu},
\quad x_{1,e}<|x|<x_{2,e}, \\[10pt]
\begin{aligned}[t]
&r_e(|x|-x_{2,e}) + \alpha b_e \log\left(\dfrac{|x|}{x_{2,e}}\right)\\
&\quad+ \psi_{2,e}(x_{2,e}),
\qquad |x|\geq x_{2,e};
\end{aligned}
\end{cases}
\end{equation}
where $\psi_{2,e}(x_{2,e})=T_e^2/(2\lambda)-(x_{2,e}-T_e)^2/(2\mu)$ makes the surrogate continuous at $x_{2,e}$. The transition points are chosen such that $\mathrm{LL}_e^{\lambda,\mu}$ is continuously differentiable across the three regimes (see supplementary material, \Cref{app:ll_derivation} for derivations). The full smoothed objective is
\begin{equation}\label{eq:jll}
J_{\mathrm{LL}}^{\lambda,\mu}(f) = \sum_{e\in E} \mathrm{LL}_e^{\lambda,\mu}(f_e).
\end{equation}

\Cref{fig:lasry_lions} illustrates this for an edge at three values of $\lambda$ with $\mu=\lambda/2$. For large $\lambda$, the quadratic and transition regions are wider and the discontinuity is smoothed more strongly. As $\lambda$ decreases, these regions contract toward the origin and the surrogate more closely tracks the fixed-plus-linear objective.

\begin{figure}[t]
    \centering
    \includegraphics[width=1\linewidth]{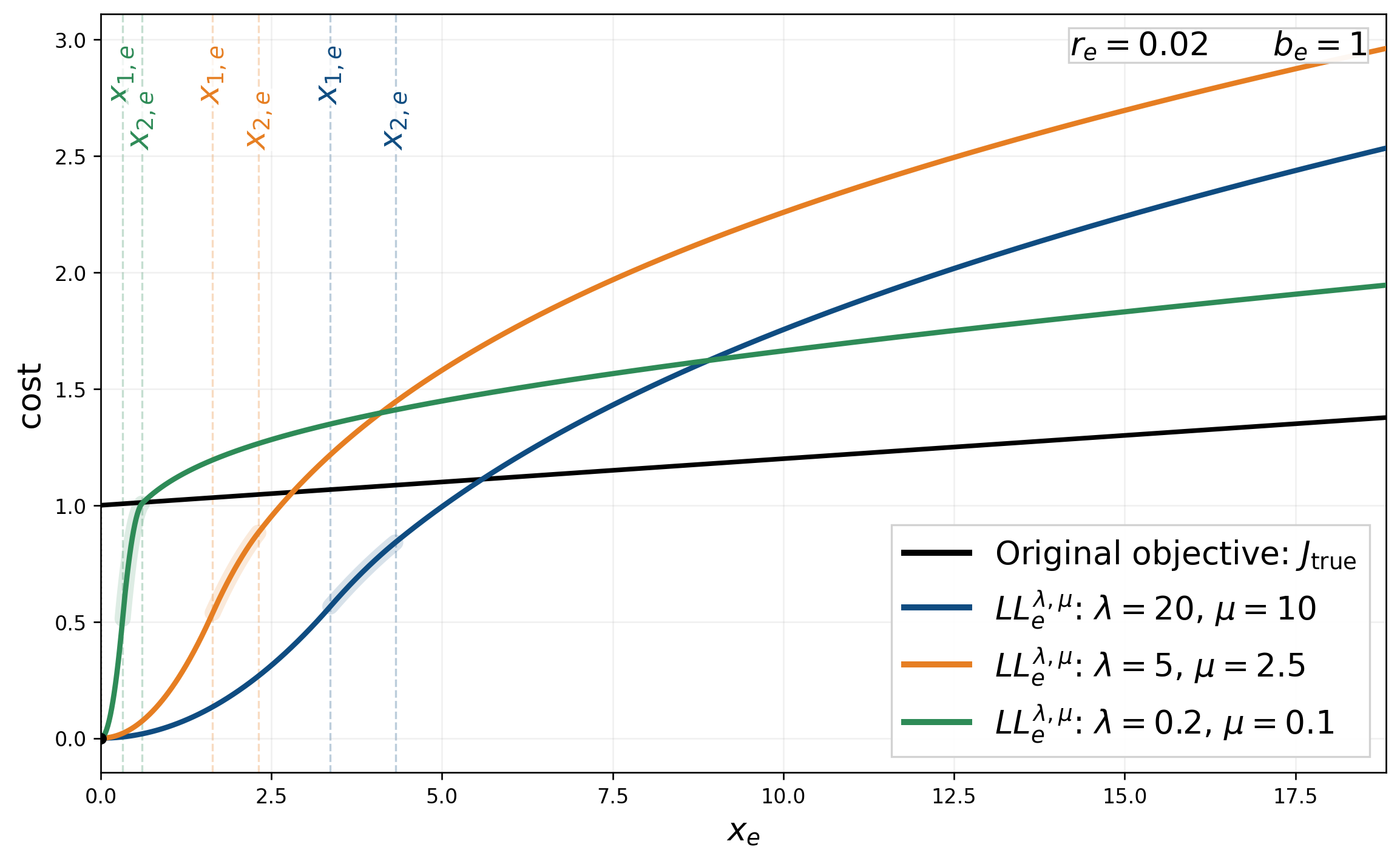}
    \caption{Lasry--Lions surrogate $\mathrm{LL}^{\lambda,\mu}_e$ for an edge with $r_e=0.02$, $b_e=1$, at three smoothing levels. $x_{1,e}$ and $x_{2,e}$ represent the transition points.}
    \label{fig:lasry_lions}
\end{figure}

\subsection{Iteratively Reweighted Least Squares}
\label{sec:irls}
We minimize the smoothed objective in \eqref{eq:jll} through an iteratively reweighted least-squares (IRLS) scheme \cite{kummerle2021iteratively}. The idea is to replace the nonconvex surrogate at each iteration by a convex weighted quadratic model whose weights are computed from the current flow.

At iteration $k$, given the current flow $f^{(k)}$, IRLS constructs edge weights $w_e^{(k)}$ and sets $W_k=\operatorname{diag}\left(w_1^{(k)},\ldots,w_m^{(k)}\right)$. With the weighted inner product $\langle u,v\rangle_{W_k}:=u^\top W_kv$, define the shifted quadratic model

\begin{equation}
\label{eq:mm-majorizer-appendix}
Q^{\lambda,\mu}(f\mid f^{(k)})=J_{\mathrm{LL}}^{\lambda,\mu}(f^{(k)})+\tfrac{1}{2}\langle f,f\rangle_{W_k}-\tfrac{1}{2}\langle f^{(k)},f^{(k)}\rangle_{W_k}.
\end{equation}
The terms independent of $f$ do not affect the minimizer. Hence the IRLS update and its equivalent weighted quadratic formulation can be written compactly as
\begin{align}
f^{(k+1)}
&=\operatorname*{argmin}_{f\in\mathcal F}
Q^{\lambda,\mu}(f\mid f^{(k)}) =\operatorname*{argmin}_{f\in\mathcal F}
\frac{1}{2}\langle f,f\rangle_{W_k}. \label{eq:newf}
\end{align}

The IRLS weights are obtained from the local curvature of the Lasry--Lions surrogate. For a nonzero flow value, we define the weight by differentiating the per-edge surrogate in \eqref{eq:lasrylions} and normalizing by the flow magnitude as $w_e(x)=\frac{\big(\mathrm{LL}_e^{\lambda,\mu}\big)'(|x|)}{|x|}$. This choice couples the quadratic subproblem to the local geometry of the Lasry--Lions surrogate. For parameters in the capped domain and exact analytical weights, $Q^{\lambda,\mu}(\,\cdot\mid f^{(k)})$ globally majorizes $J_{\mathrm{LL}}^{\lambda,\mu}$ and is tangent to it at the current iterate. Since small flows receive large weights, arcs carrying negligible flow become increasingly expensive in subsequent subproblems and are encouraged to vanish. Larger active flows receive smaller weights, allowing the algorithm to concentrate flow on a sparse support. The weights are given by
\begin{equation}\label{eq:weight_ll}
w_e(f_e) =
\begin{cases}
\dfrac{1}{\lambda-\mu},
& |f_e|\leq x_{1,e}, \\[10pt]
\dfrac{T_e-|f_e|}{\mu |f_e|},
& x_{1,e}<|f_e|<x_{2,e}, \\[12pt]
\dfrac{r_e}{|f_e|} + \dfrac{\alpha b_e}{|f_e|^2},
& |f_e|\geq x_{2,e}.
\end{cases}
\end{equation}
For $f_e=0$, the first case is used, so the weight remains finite.
In the active-flow region, the term $r_e/|f_e|$ represents the proportional cost contribution, while the term $\alpha b_e/|f_e|^2$ acts as an $\ell_0$-like fixed-charge penalty. 

Under the capped parameter condition, \Cref{prop:fixed-parameter-majorization} (Supplementary Material, \Cref{app:fixed-parameter-majorization}) shows that the exact weight and exact update in \eqref{eq:newf} constitute a majorization--minimization scheme \cite{SunBabuPalomar-IEEESP2017} for the fixed-parameter surrogate $J_{\mathrm{LL}}^{\lambda,\mu}$. At iteration $k$, the analytical weights are evaluated at the current flow $f^{(k)}$, giving $w_e^{(k)} = w_e(f_e^{(k)}),\ e\in E$. These weights are then fixed, and the next iterate is obtained by solving the weighted quadratic flow subproblem in \eqref{eq:newf}, which can be solved efficiently (see \Cref{sec:dual_ssn}). Each subproblem preserves the original flow-conservation and capacity constraints. The numerical implementation clips weights and solves QPs inexactly, so it verifies fixed-parameter surrogate descent directly rather than relying on exact majorization alone. Overall, the weight update and \eqref{eq:newf} provide a continuous mechanism for seeking low-cost FCNFP flows without explicitly introducing binary activation variables or tree-based search.

\subsection{Weighted Quadratic Program Solver}
\label{sec:dual_ssn}
Each IRLS iteration requires solving the weighted quadratic flow subproblem in \eqref{eq:newf}, which we solve by a warm-started dual semismooth Newton (SSN) method \cite{HintermuellerItoKunisch-2002}. Introducing a dual variable $y\in\mathbb{R}^n$ for the flow-conservation constraint $Cf=d$, the box-constrained primal minimizer for a fixed $y$ is available in closed form as $f(y)=\operatorname{clip}\!\left(W^{-1}C^\top y;\,0,\,c\right)$, where the clipping is applied component-wise. The corresponding dual residual is $R(y)=d-Cf(y)$. Thus, solving the subproblem is equivalent to finding a root of $R(y)=0$. An SSN step solves
\begin{equation}
\label{eq:ssnstep}
\left(CD(y)C^\top\right)\Delta y=d-Cf(y),
\end{equation}
where \(D(y)=\operatorname{diag}\!\left(w_e^{-1}\mathbf{1}\{0<(C^\top y)_e<w_ec_e\}\right)\). 
Only free arcs, those clipped at neither bound, contribute to this Newton system, so $C D(y) C^\top$ is a weighted graph Laplacian on the currently free subgraph, singular on each connected component's constant potential; we remove this gauge freedom by mean-centering. Because $D(y)$ vanishes on arcs at a bound, a literal Newton step from $y=0$ is degenerate; each solver call is instead seeded by a predictor read off the current primal flow, with a regularized, line-searched Newton correction when the predictor is rejected (see supplementary material, \Cref{app:dualssn}). Each Newton system is solved by preconditioned conjugate gradient (PCG) using a freshly computed approximate-Cholesky factorization \cite{githubGitHubAaiinstituteapproxchol} of the free-subgraph Laplacian as preconditioner, with an escalating iteration budget if the residual has not converged.
 
The solver is warm-started across IRLS iterations from the previous dual potential $y$ and the previous lower and upper active sets. Each Newton system costs $\widetilde O(m\log(1/\epsilon))$ \cite{Kyng2016approximate}, so a run of $K$ outer iterations averaging $\bar s$ Newton steps per subproblem performs $\widetilde O\!\left(K\,\bar s\,m\log(1/\epsilon)\right)$ work. In our runs, $\bar s$ stays below $4$ across a $200$-fold range in $m$ (see supplementary material, \Cref{app:dualssn}).

The IRLS weights are clipped to $[10^{-12},10^{12}]$, and each dual-SSN QP call is capped at $40$ Newton iterations. If the SSN does not converge and leaves a flow-conservation residual above $10^{-6}$, the algorithm falls back to the primal-dual interior-point method (see supplementary material, \Cref{app:dualssn}), which exploits the same weighted graph-Laplacian Newton systems.

\section{Algorithm}
\label{sec:algorithm}
We outline a core IRLS method for FCNFP in \Cref{alg:irls_core}: each iteration evaluates the weights \eqref{eq:weight_ll} at the current flow and solves the weighted quadratic subproblem over $\mathcal F$ with the dual semismooth Newton method of \Cref{sec:dual_ssn}. A trial iterate is accepted only if it does not increase $J_{\mathrm{LL}}^{\lambda,\mu}$ beyond a numerical tolerance; otherwise the algorithm backtracks along the segment joining the current and the trial flows; if no tested step is accepted, the current flow is retained. The reported solution is the best feasible candidate encountered during the run, scored under $J_{\mathrm{true}}$.

The run is organized into stages. Within a stage, $\lambda$ is updated non-increasingly based upon the flow gap quantity $\lambdagap$ (see \Cref{sec:lambda_update}). A new algorithmic stage is triggered if the relative flow change falls below $10^{-3}$, if a line search rejects the step or after six weighted-QP solves, in which case $\lambda$ is decreased more aggressively. 

Let $|f|_{(1)}\ge\cdots\ge|f|_{(m)}$ be the sorted flow magnitudes, let $K_{\text{gap}}=\arg\max_{1\le j<m}|f|_{(j)}/|f|_{(j+1)}$, and set $f_\uparrow=|f|_{(K_{\text{gap}})}$, $f_\downarrow=|f|_{(K_{\text{gap}}+1)}$, and $\rho_{\text{gap}}=f_\uparrow/f_\downarrow$. The pair $(f_\uparrow,f_\downarrow)$ brackets the widest gap in the flow magnitudes, so $K_{\text{gap}}$ is the support size that gap implies, $f_\downarrow$ the largest magnitude outside that support, and $\rho_{\text{gap}}$ a measure of how cleanly the two sides separate. This signal is used both by a $\lambda$-update rule and the support-refit procedure below. In practice, relevant arcs and a support estimate could be identified via an activation threshold $\tau$ (e.g., of $10^{-6}$), below which flows are set to zero; however, such a thresholding will yield generally infeasible flow vectors. To improve on this, we solve a linear min-cost flow restricted to the support set estimate $S$, which is updated whenever $\rho_{\text{gap}}\ge10$ and $0<K_{\text{gap}}\le\max\{4n,1024\}$ by setting $S=\{e:|f_e|>\sqrt{f_\uparrow f_\downarrow}\}$. While this is not an exact solver for the FCNFP problem on the $S$-restricted subgraph defined by $S$, this \emph{support refit procedure} \textsc{Refit} provides a mechanism to meaningfully score candidate supports. Each stage boundary additionally produces a full-graph candidate, which may open arcs outside $S$. Candidates enter $f^\star$ only when feasible and strictly improving.

\Cref{alg:irls_core} terminates if $\rho_{\text{gap}}\ge100$, $K_{\text{gap}}$ has been stable for four consecutive iterations, and $f^\star$ has not improved for eight iterations. Alternatively, termination is triggered if the objective value $J_{\mathrm{LL}}^{\lambda,\mu}(f^{(k)})$ stagnates or if $\lambda$ reaches a numerical precision floor. More details on support refit, step acceptance, stagnation, and candidate evaluation are given in the supplementary material, \Cref{app:algorithmic_details}.

\begin{algorithm}[t]
\caption{IRLS for FCNFP}
\label{alg:irls_core}
\begin{algorithmic}[1]
\Require Network $(C,d,c,r,b)$, $f^{(0)}\in\mathcal F$, $K=200$
\State $(\lambda_0,\mu_0)\gets\textsc{InitializeSmoothing}(f^{(0)})$;\; $f^\star\gets f^{(0)}$
\For{$k=0,\ldots,K-1$}
    \State $w^{(k)}\gets\textsc{Weights}(f^{(k)},\lambda_k,\mu_k)$,  \hfill{\small$\triangleright$ Eq.~\eqref{eq:weight_ll}} 
    \State $(\tilde f, y)\gets\textsc{DualSSN}(w^{(k)},\mathcal F; y)$ \hfill{\small$\triangleright$ Eq.~\eqref{eq:newf}}
    \State $(f^{(k+1)},y)\gets
    \textsc{Accept}(f^{(k)},\tilde f,\lambda_k,\mu_k)$ \hfill{\small$\triangleright$ backtrack}
    \State $g^{(k+1)}=(K_{\text{gap}},\rho_{\text{gap}},f_\uparrow,f_\downarrow)\gets\textsc{FlowGap}(f^{(k+1)})$
    \State $f^\star\gets\textsc{Best}\bigl(f^\star,f^{(k+1)},\textsc{Refit}(f^{(k+1)},g^{(k+1)})\bigr)$

    \State $(\lambda_{k+1},\mu_{k+1})
    \gets\textsc{Continuation}
    (\lambda_k,\mu_k,g^{(k+1)})$ 
    \If{$\textsc{Stop}$} \textbf{break} \EndIf \hfill{\small$\triangleright\ \lambda\le\lambda_{\min}$, stable gap, stagnation}
\EndFor
\State \Return $f^\star$
\end{algorithmic}
\end{algorithm}

\section{Smoothing Continuation Scheme}
\label{sec:lambda_update}

For the analytical surrogate with unmodified costs,
$J_{\mathrm{LL}}^{\lambda,\mu(\lambda)}$ converges pointwise to
$J_{\mathrm{true}}$ on $\mathcal F$ as $\lambda\downarrow0$ along any
admissible path $\mu(\lambda)$; in particular, this holds for
$\mu(\lambda)=\lambda/2$ (Supplementary Material, \Cref{app:ll_derivation}). This consistency
motivates starting from a broadly smoothed model and progressively sharpening
it toward the fixed-charge objective
\cite{peng2023convergence}. 
Thus, we decrease $\lambda$ by a stage-wise continuation rule and set $\mu=\min\{\lambda/2,\ \min_{e\in E}\mu_{\max,e}(\lambda)\}$ at every parameter update.
We propose a specific continuation schedule that balances rapid sharpening against premature support commitment. Specifically, since $T_e=\lambda r_e+\sqrt{2\lambda b_e}$ has the scale of a flow magnitude, $\lambda$ has units of squared flow per unit cost, which motivates fixing $\lambda_0$ at the scale of an initial feasible flow $f^{(0)}$. With $f_{\mathrm{med}}$ the median initial flow magnitude above one and $b_{\mathrm{med}}$ the median positive fixed cost, sparse flow initializations use $\lambda_0=10^{-2}\max\{f_{\mathrm{med}}^{2}/(2b_{\mathrm{med}}),1\}$, which places $T_e$ ten times below $f_{\mathrm{med}}$ so that the active arcs of the start begin in the logarithmic tail. Dense flow starts are instead over-smoothed with $\lambda_0=10^{2}\max\{\max_e|f^{(0)}_e|,1\}$. We set $\mu_0=\min\{\theta_0\lambda_0,\ \min_{e\in E}\mu_{\max,e}(\lambda_0)\}$ with $\theta_0=0.3$ and $0.7$, respectively. Both initial parameter pairs are admissible under \Cref{prop:fixed-parameter-majorization} (\Cref{app:fixed-parameter-majorization}) by construction.

The update candidate for $\lambda_{k+1}$ is the gap-based value $\lambdagap=f_{\downarrow}^{\,2}/(2b_{\mathrm{rep}})$, where $b_{\mathrm{rep}}$ is a representative fixed cost of the arcs near the gap defined by $K_{\text{gap}}$ and is motivated by the scale match of $
T_e\approx\sqrt{2\lambdagap b_{\mathrm{rep}}}=f_\downarrow$ for negligible $r_e$. Within a stage, we set $\lambda_{k+1}=\max\{\min\{\lambda_{k},\lambdagap\},\lambda_{k}/5\}$, whereas we set $\lambda_{k+1}=\max\{\min\{\lambda_{k}/4,\lambdagap\},\lambda_{k}/100,10^{-9}\}$ across stage boundaries. 
We use this acceleration only after the detected gap is
sufficiently pronounced and stable; the trust conditions and remaining
schedule details are given in \Cref{app:smoothing_details}.

For exact weights and an exact QP solve, \Cref{prop:fixed-parameter-majorization} (\Cref{app:fixed-parameter-majorization}) gives
$J_{\mathrm{LL}}^{\lambda_k,\mu_k}(f^{(k+1)})
\leq J_{\mathrm{LL}}^{\lambda_k,\mu_k}(f^{(k)})$;
acceptance/backtracking, with rejection as a fallback, enforces the same
current-pair nonincrease numerically. Continuation changes the surrogate, so
no monotonic comparison across parameter pairs is claimed. Thus,
fixed-pair descent supplies stability while continuation supplies sharpening.
The best feasible incumbent remains nonincreasing under $J_{\mathrm{true}}$.
\section{Algorithmic Layers}
\label{sec:algo_layers}

The IRLS procedure provides a continuous mechanism for refining a feasible flow under the smoothed fixed-charge surrogate. However, because the surrogate is nonconvex, the final support can strongly depend on the starting point. We therefore use a layered design that combines continuous reweighting with structured support exploration. Each layer maintains the best feasible solution, scored by the true fixed-charge objective $J_{\mathrm{true}}$.

\paragraph{Layer 1: Multi-start IRLS.}
The first layer runs the IRLS core method of \Cref{alg:irls_core} from five complementary feasible initializations: a linear-cost-weighted quadratic-flow solution, a quadratic-flow solution weighted by the combined linear and capacity-amortized fixed cost, a linearized MCF solution, a capacity-amortized MCF solution, and a DSSP slope-scaling iterate; implementation details are given in supplementary material, \Cref{app:algo_layers}. Duplicate supports are removed, and IRLS is run from each remaining start. This layer gives the algorithm access to several plausible support regions without introducing binary activation variables.

\paragraph{Layer 2: Objective-driven perturbation restarts.}
The second layer explores alternatives around each support discovered by Layer 1. Given an incumbent flow $f$, active arcs are scored by their realized average cost $r_e+b_e/\max\{f_e,1\}$, while inactive arcs are scored by their capacity-amortized opening cost $r_e+b_e/\max\{c_e,1\}$. These scores identify expensive active arcs that may be worth suppressing and cheap inactive arcs worth opening. Using these scores, we form a perturbed target $\tilde f$ by cutting flow on the expensive active arcs and putting flow on the cheap inactive ones.
The resulting perturbed targets are not required to be feasible; instead, each target is projected back onto the capacitated flow polytope $\mathcal{F} = \{g\in\mathbb{R}^{|E|}: Cg=d, \; 0\leq g\leq c\}$ via $\Pi_{\mathcal F}(\tilde f) = \arg\min_{g\in\mathcal F}\|g-\tilde f\|_2^2$.
Projections that fail or duplicate a previously generated support are discarded. The surviving projections are ranked using their true objective values, support changes, and numbers of active arcs, and the highest-ranked candidates are used as new IRLS starts. This layer acts as a structured escape mechanism from locally stable but suboptimal supports. Details of the perturbation rules, candidate selection, and projection step are given in supplementary material, \Cref{app:algo_layers}.

\paragraph{Layer 3: Anchor-union restricted IRLS.}
The third layer expands the search beyond the supports discovered by IRLS and its perturbation restarts. This is important because some useful arcs may never enter an IRLS trajectory once the surrogate becomes sharp, especially when opening them requires coordinated changes across an entire path. We therefore augment the IRLS supports with supports obtained from external FCNFP algorithms. Specifically, we use ADSSP solutions generated from the \texttt{max\_flow}, \texttt{unitflow}, \texttt{trans}, and \texttt{multi\_unit} initializations as support anchors. These solutions often provide sparse feasible supports containing useful combinatorial structure, even when they do not attain the best objective value.

Let $\mathcal X_{\mathrm{IRLS}}$ denote the candidate flows from Layer 2, and let $\mathcal X_{\mathrm{anc}}$ denote anchor flows from the external solver. For any flow $x$, define its support $S(x)=\{e\in E: x_e>\tau\}$, where $\tau$ is the arc activation threshold. Each candidate union support has the form $E' = S(f)\cup S(a)$, where $f\in\mathcal X_{\mathrm{IRLS}}$ and $S(a)$ is empty, a single anchor support, or the union of all anchor supports. Duplicate, empty, and infeasible supports are discarded.

For each retained union $E'$, we compute feasible refit flows by solving restricted linear flow problems with costs consisting of variable cost and capacity-amortized fixed cost. Several fixed-cost weights are tried, together with the corresponding Layer 2 flow, and the best feasible starts are retained. We then run IRLS on the subproblem obtained by fixing all arcs outside $E'$ to zero. A lightweight within-union refinement then tests deleting expensive active arcs and reintroducing promising inactive ones, keeping only feasible improvements. All candidates are scored under the original fixed-charge objective $J_{\mathrm{true}}$, and the best feasible solution is retained. Further implementation details are given in supplementary material, \Cref{app:algo_layers}. \Cref{alg:anchor_union_irls} gives this complete anchor-union restricted IRLS procedure.

\begin{algorithm}[t]
\caption{Layer 3: Anchor-Union Restricted IRLS}
\label{alg:anchor_union_irls}
\begin{algorithmic}[1]
\Require Network data $(C,d,c,r,b)$, IRLS starts $\mathcal I$, ADSSP starts $\mathcal{A}$
\State $\mathcal X \gets \{\textsc{IRLS}(f^{(0)}): f^{(0)}\in\mathcal I\}$ \Comment{Layer 1}
\State $\mathcal P \gets \{\Pi_{\mathcal F}(\tilde f):\tilde f\in\textsc{Perturb}(\mathcal X)\}$
\State $\mathcal X_{\mathrm{IRLS}} \gets \mathcal X \cup\{\textsc{IRLS}(p):p\in\mathcal P,\ p\text{ feasible}\}$
\State $x^\star \gets \arg\min_{f\in\mathcal X_{\mathrm{IRLS}}} J_{\mathrm{true}}(f)$;\; $J^\star \gets J_{\mathrm{true}}(x^\star)$ \Comment{Layer 2}
\State $\mathcal X_{\mathrm{anc}} \gets \{\textsc{ADSSP}(i): i\in\mathcal A\}$
\State $\mathcal A^+ \gets \{\varnothing\}\cup\{S(a):a\in\mathcal X_{\mathrm{anc}}\}\cup\{\textstyle\bigcup_{a\in\mathcal X_{\mathrm{anc}}} S(a)\}$
\State $\mathcal U \gets \{\,S(f)\cup A : f\in\mathcal X_{\mathrm{IRLS}},\ A\in\mathcal A^+\,\}$
\For{$E' \in \mathcal U$}
    \State $\mathcal C(E') \gets \{f|_{E'} : f\in\mathcal X_{\mathrm{IRLS}}\} \cup \{\textsc{Refit}(E')\}$
    \State $\bar x \gets \textsc{IRLS}(E';\,\mathcal C(E'))$
    \State $\hat x \gets \textsc{Refine}(\bar x, E')$ \Comment{local add/drop within $E'$}
    \If{$J_{\mathrm{true}}(\hat x) < J^\star$} $(x^\star,J^\star)\gets(\hat x,\,J_{\mathrm{true}}(\hat x))$ \EndIf
\EndFor
\State \Return $x^\star$
\end{algorithmic}
\end{algorithm}

\section{Experiments}
\label{sec:experiments}
We evaluate the proposed method on a collection of $410$ single-commodity FCNFP instances drawn from benchmark, synthetic, and large-scale families. Benchmark sets support comparison to prior work; scale families probe MILP-intractable sizes. The set includes \texttt{FCNetLib\_fc} from \cite{atamturk2001flow}, \texttt{benchmark} from \cite{zibBenchmark}, synthetically generated \texttt{synthetic} instances, and nine \texttt{scale} families from \cite{yang2024sequential}. 
All methods are evaluated using the true fixed-charge objective in \eqref{eq:jtrue}, with arc
activation threshold $\tau=10^{-6}$. A returned flow is treated as feasible only if the flow-conservation residual, capacity violation, and nonnegativity violation are each at most $10^{-6}$.

We compare the proposed method against three FCNFP baselines and a MILP reference. The baselines are linearized MCF, DSSP, and ADSSP. Linearized MCF solves a min-cost-flow problem using arc costs that approximate the combined fixed and proportional costs. DSSP \cite{kim1999solution} and ADSSP \cite{yang2024sequential} are described in \Cref{sec:related_work}. When the MILP proves optimality, its objective provides a certified optimum; otherwise, its feasible incumbent is used only as a time-limited reference.

In our experiments, L1, L2 and L3 denote Layers 1--3 as described in \Cref{sec:algo_layers}. All heuristic methods use multiple feasible starts when applicable, and the best feasible solution under $J_{\mathrm{true}}$ is reported. Each method is given a solver time limit of $300$ seconds per instance. Reported times are wall-clock and include model construction and solution extraction, so they can exceed this limit. Each individual IRLS run is limited to $200$ outer reweighting iterations.

We report two objective-gap metrics. For a method $z$ on instance $i$ with objective $J_z(i)$, the gap relative to MILP is
\[
\mathrm{Gap}_{\mathrm{MILP}}(z,i) = 100\cdot \frac{J_z(i)- J_{\mathrm{MILP}}(i)}{|J_{\mathrm{MILP}}(i)|},
\]
where $J_{\mathrm{MILP}}(i)$ is either a certified optimum or a time-limited MILP incumbent. 
The gap relative to the best non-MILP method is
\[
\mathrm{Gap}_{\mathrm{best}}(z,i) = 100\cdot \frac{J_z(i)- J_{\mathrm{best}}(i)}{|J_{\mathrm{best}}(i)|},
\]
where $J_{\mathrm{best}}(i)$ is the best feasible objective among the non-MILP methods on instance $i$.

Full hardware, software versions, random seeds, and instance-generation details are given in supplementary material, \Cref{app:experiment_setup}.

\paragraph{Results.}
\Cref{tab:overall} reports the aggregate comparison over all $410$ instances with gaps taken relative to the MILP objective---a certified optimum where the MILP proved optimality, and a time-limited incumbent otherwise. MILP proved optimality on $235$ of the $410$ instances. L3 achieves the best objective quality among all heuristics, with a mean gap of $1.316\%$ and a median gap of $0.131\%$. This improves over L2, L1, ADSSP, DSSP, and MCF. Here, L3$^-$ denotes L3 with anchors disabled; it still outperforms ADSSP, showing that L3's gain does not depend on its anchor supports alone (see Supplementary Material \Cref{app:results}).

\begin{table}[t]
\centering
\caption{Aggregate performance on the $410$ instances with a matched feasible MILP baseline.  Gaps are relative to the MILP objective; times are seconds.}
\label{tab:overall}
\small
\setlength{\tabcolsep}{4pt}
\begin{tabular}{lrrrrrr}
\toprule
& \multicolumn{4}{c}{Gap to MILP baseline (\%)} &
\multicolumn{2}{c}{Time (s)} \\
\cmidrule(lr){2-5}\cmidrule(lr){6-7}
Method & mean & p25 & median & p75 & mean & median \\
\midrule
MILP & 0.000 & 0.000 & 0.000 & 0.000 & 144.939 & 64.516 \\
\midrule
L3    & \textbf{1.316} & \textbf{0.000} & \textbf{0.131} & \textbf{1.278} & 17.943 & 8.074 \\
L3$^-$  & 2.073 & 0.013 & 0.560 & 2.060 & 6.523 & 1.775 \\
L2    & 2.334 & 0.037 & 0.749 & 2.424 & 5.999 & 2.632 \\
L1    & 3.420 & 0.077 & 1.412 & 3.893 & 1.707 & 0.723 \\
ADSSP & 2.931 & 0.027 & 0.695 & 3.792 & 0.370 & 0.151 \\
DSSP  & 4.084 & 0.045 & 2.164 & 5.903 & 1.391 & 0.369 \\
MCF   & 8.304 & 0.118 & 4.199 & 11.577 & \textbf{0.089} & \textbf{0.043} \\
\bottomrule
\end{tabular}
\end{table}

\Cref{tab:heuristic_ablation} reports the non-MILP methods only comparison on all $410$ instances. This table isolates the contribution of the algorithmic layers without relying on a MILP reference. The base IRLS layer has a mean gap of $2.163\%$ to the best displayed heuristic. L2 reduces this gap to $1.106\%$, while L3 reduces it further to \(0.140\%\). L3 is also the best or tied-best method on $90.0\%$ of instances.

\begin{table}[t]
\centering
\caption{Heuristic layer-ablation performance over all $410$ instances.
Gaps are relative to the best displayed heuristic on each instance.}
\label{tab:heuristic_ablation}
\small
\setlength{\tabcolsep}{6pt}
\begin{tabular}{lrrr}
\toprule
Method & Mean gap (\%) & Median gap (\%) & Win/tie rate (\%) \\
\midrule
L3    & \textbf{0.140} & \textbf{0.000} & \textbf{90.0} \\
L2    & 1.106 & 0.441 & 22.9 \\
L1    & 2.163 & 1.100 & 12.0 \\
ADSSP & 1.709 & 0.402 & 16.6 \\
DSSP  & 2.848 & 1.468 & 12.2 \\
MCF   & 6.903 & 3.765 & 6.1 \\
\bottomrule
\end{tabular}
\end{table}

Family-wise and MILP-status breakdowns further show that L3's advantage persists across instance scales and MILP certification status (see Supplementary Material \Cref{app:results}).

\paragraph{Discussion.}
The experiments indicate that support quality is the primary determinant of performance in large-scale FCNFP. IRLS (\Cref{alg:irls_core}) provides a suitable continuous mechanism for identifying flows with locally minimal fixed-charge cost objectives, with its final support still related to the initial flow vector due to the inherent combinatorial problem structure. Perturbation restarts (introduced via L2) empirically improve final flow costs by exploring nearby support with the anchor-union layer of the L3 pipeline of \Cref{alg:anchor_union_irls} leading to further performance increase to achieve the state-of-the-art among non-MILP methods. The intuition is that a strictly \emph{better} support can be found by jointly considering IRLS-discovered supports with support candidates identified from other methodologies, before reoptimizing via IRLS continuously over the resulting restricted subgraphs. The ADSSP anchor supports, which are part of the L3 pipeline, help to improve performance over L1 or L2, with L3, however, improving performance beyond a pure ADSSP method through its IRLS-enabled support discovery. 

The runtime results show the expected quality--time tradeoff. MCF and ADSSP are attractive when solutions are needed almost immediately. L3 is more expensive because it performs multiple IRLS solves, restricted refits, and support refinements. However, it remains far cheaper than MILP on average while consistently providing the strongest heuristic objective values.
Therefore, the method is best viewed as a high-quality primal algorithm for settings where a strong feasible solution is more important than a certificate of global optimality.

\section{Conclusion}
\label{sec:conclusion}
We proposed a scalable continuous-optimization algorithm based on IRLS for the fixed-charge network flow problem. The method replaces the discontinuous fixed-plus-linear cost objective with a smooth Lasry--Lions surrogate, solves the resulting weighted quadratic flow subproblems using Laplacian-structured linear algebra, and augments the continuous reweighting process with perturbation restarts and anchor-union restricted search.

Across $410$ instances, our method achieves the best objective quality among the evaluated FCNFP baselines, including on large-scale instances where the MILP cannot certify optimality. These results show that combining smooth continuous optimization with structured support exploration is an effective strategy for large-scale FCNFP.

\section*{Acknowledgments}
This work was supported in part by the NSF grant CCF-2549926.

\bibliographystyle{unsrt}
\bibliography{reference}
\appendix
\section{Supplementary Material}
\subsection{Derivation of the Lasry--Lions Surrogate}
\label{app:ll_derivation}
This section derives the Lasry--Lions double envelope surrogate used in the main paper (\Cref{sec:lasrylions}): the first Moreau envelope, the exact double envelope, the logarithmic active-flow modification that yields the implemented surrogate, the admissible range for $\mu$, continuous differentiability of the resulting surrogate, and the proof that it converges pointwise to the true fixed-charge objective as $\lambda\downarrow0$.

Throughout this subsection we fix a single, arbitrary edge and suppress its
index: we write $r$ and $b$ for its proportional and fixed cost, and likewise
$J$, $T$, $S$, $x_1$, $x_2$, $\psi_2$, $\mu_{\max}$ and
$\mathrm{LL}^{\lambda,\mu}$ for the corresponding per-edge quantities. We assume the nonnegative-cost setting
$r_e\geq0$, $b_e\geq0$ and $r_e+b_e>0$ for every edge $e\in E$; for the fixed
edge this reads $r,b\geq0$ and $r+b>0$. The per-edge objective is
\[
J(f) = r|f|+b\,\mathbf{1}\{|f|>0\}.
\]
For the directed FCNFP, $f\geq 0$, but the derivation is written in terms of $t=|f|$ so that it also applies to a signed-flow formulation.

\paragraph{First Moreau envelope.}
For $\lambda>0$, define the Moreau envelope \cite{moreau1965proximite}
\[
M^\lambda J(t) = \inf_{u\geq 0} \left\{ J(u)+\frac{(t-u)^2}{2\lambda}\right\}.
\]
There are two relevant candidates. Setting $u=0$ gives
\[
q_0(t)=\frac{t^2}{2\lambda}.
\]
For $u>0$, the objective is
\[
b+ru+\frac{(t-u)^2}{2\lambda}.
\]
Its stationary point is $u=t-\lambda r$, which gives
\[
q_1(t) = b+rt-\frac{\lambda r^2}{2}.
\]
The two candidates coincide when
\[
\frac{t^2}{2\lambda} = b+rt-\frac{\lambda r^2}{2},
\]
or equivalently,
\[
(t-\lambda r)^2=2\lambda b.
\]
Hence, the transition point is
\[
T=\lambda r+\sqrt{2\lambda b},
\]
and the first Moreau envelope is
\begin{equation}
\label{eq:first_moreau}
M^\lambda J(t) =
\begin{cases}
\dfrac{t^2}{2\lambda},
& 0\leq t\leq T,\\[8pt]
b+rt-\dfrac{\lambda r^2}{2},
& t > T.
\end{cases}
\end{equation}

\paragraph{Lasry--Lions double envelope.}
For $0<\mu<\lambda$, the Lasry--Lions double envelope can be written as
\[
L^{\lambda,\mu} J(t) = \sup_{u\geq 0} \left\{ M^\lambda J(u)-\frac{(t-u)^2}{2\mu} \right\}.
\]

Consider first the quadratic branch of \eqref{eq:first_moreau}. The maximization
problem becomes
\[
\sup_{0\leq u\leq T} \left\{ \frac{u^2}{2\lambda}-\frac{(t-u)^2}{2\mu}\right\}.
\]
The unconstrained maximizer is
\[
u^\star=\frac{\lambda t}{\lambda-\mu}.
\]
It remains in the quadratic branch when $u^\star\leq T$, or
\[
t\leq x_1, \qquad x_1=\frac{\lambda-\mu}{\lambda}T.
\]
Substitution gives
\[
L^{\lambda,\mu} J(t) = \frac{t^2}{2(\lambda-\mu)}, \qquad 0\leq t\leq x_1.
\]

When $t>x_1$, the quadratic-branch maximizer would lie beyond $T$.
For an intermediate range, the maximum is therefore attained at the junction
$u=T$, giving
\[
L^{\lambda,\mu} J(t) = \frac{T^2}{2\lambda}-\frac{(t-T)^2}{2\mu}.
\]
The derivative of this middle branch is
\[
\frac{T-t}{\mu}.
\]

For the linear branch of \eqref{eq:first_moreau}, the maximizing point is $u^\star=t+\mu r$. It lies in the linear branch when $t+\mu r\geq T$, or equivalently  $t\geq S:=T-\mu r$.

Substitution gives the exact Lasry--Lions tail
\[
L^{\lambda,\mu}J(t) = b+rt-\frac{(\lambda-\mu)r^2}{2},
\qquad t\geq S.
\]
Thus, the exact Lasry--Lions double envelope is
\begin{equation}
\label{eq:exact_ll}
L^{\lambda,\mu} J(t) =
\begin{cases}
\dfrac{t^2}{2(\lambda-\mu)},
& 0\leq t\leq x_1,\\[9pt]
\dfrac{T^2}{2\lambda} -\dfrac{(t-T)^2}{2\mu},
& x_1<t<S,\\[9pt]
b+rt-\dfrac{(\lambda-\mu)r^2}{2},
& t\geq S,
\end{cases}
\end{equation}
which is continuously differentiable at both transition points.

\paragraph{Logarithmic active-flow modification.}
The exact Lasry--Lions tail in \eqref{eq:exact_ll} has derivative $r$.
Consequently, an IRLS weight based on $L'(t)/t$ would equal $r/t$ in the active-flow region.
The fixed charge $b$ would appear only as an additive constant and would therefore disappear from the reweighting rule.

To retain a fixed-charge contribution in the active-flow weights, we replace the exact linear tail by the logarithmic continuation
\begin{equation}
\label{eq:log_tail_ansatz}
\psi_3(t) = \psi_2(x_2) + r(t-x_2) +\alpha b\log\left(\frac{t}{x_2}\right), \qquad t\geq x_2,
\end{equation}
where
\[
\psi_2(t) = \frac{T^2}{2\lambda} - \frac{(t-T)^2}{2\mu}
\]
is the middle Lasry--Lions branch. The coefficient
\[
\alpha = \frac{\lambda}{\lambda+s\lambda_0},
\qquad
s=10^{-2},
\]
is shared across all edges and controls the strength of the logarithmic term.

Value continuity at $x_2$ holds automatically because of the additive constant $\psi_2(x_2)$. To obtain derivative continuity, we require
\[
\psi_2'(x_2)=\psi_3'(x_2).
\]
Since
\[
\psi_2'(t)=\frac{T-t}{\mu}
\qquad\text{and}\qquad
\psi_3'(t)=r+\frac{\alpha b}{t},
\]
the matching condition is
\[
\frac{T-x_2}{\mu} = r+\frac{\alpha b}{x_2}.
\]
Multiplication by $\mu x_2$ gives
\[
x_2(T-x_2) = \mu r x_2+\mu\alpha b,
\]
and therefore
\[
x_2^2-(T-\mu r)x_2+\mu\alpha b=0.
\]
Defining
\[
S=T-\mu r,
\]
the two roots are
\[
x_2^\pm = \frac{S\pm\sqrt{S^2-4\mu\alpha b}}{2}.
\]
We select the larger root,
\begin{equation}
\label{eq:x2_derivation}
x_2 = \frac{S+\sqrt{S^2-4\mu\alpha b}}{2},
\end{equation}
because $x_2\to S$ as $\alpha\to 0$, thereby recovering the transition point of the exact Lasry--Lions envelope. The smaller root instead converges to zero.

The resulting implemented surrogate is
\begin{equation}
\label{eq:combined_ll_derivation}
\mathrm{LL}^{\lambda,\mu}(x) =
\begin{cases}
\dfrac{|x|^2}{2(\lambda-\mu)}, & |x|\leq x_1,\\[10pt]
\dfrac{T^2}{2\lambda} -\dfrac{(|x|-T)^2}{2\mu}, & x_1<|x|<x_2,\\[10pt]
\psi_2(x_2) +r(|x|-x_2) + \\
\alpha b\log\left(\dfrac{|x|}{x_2}\right), & |x|\geq x_2.
\end{cases}
\end{equation}
It preserves the two Lasry--Lions parabolic regimes near the origin while introducing a logarithmic active-flow tail that retains the fixed-charge information required by IRLS.

\paragraph{Admissible values of $\mu$.}\label{appendix:mu_bound}
For $x_2$ to be real, its discriminant must be nonnegative:
\[
(T-\mu r)^2-4\mu\alpha b\geq 0.
\]
Expanding gives
\[
r^2\mu^2-(2Tr+4\alpha b)\mu+T^2\geq 0.
\]
Let
\[
A=2Tr+4\alpha b.
\]
For $r>0$, the admissible small-$\mu$ branch is
\[
\mu\leq\frac{A-\sqrt{A^2-4r^2T^2}}{2r^2}.
\]
Rationalizing this expression gives the numerically stable bound
\begin{equation}
\label{eq:mu_bound_derivation}
\mu_{\max} = \frac{2T^2}{A+\sqrt{A^2-4r^2T^2}}.
\end{equation}
Therefore,
\begin{equation}
\label{eq:mumax-appendix}
\mu_{\max}=\frac{2T^2}{2T r+4\alpha b+\sqrt{(2T r+4\alpha b)^2-4r^2T^2}}.
\end{equation}
In the implementation, $\mu$ is capped so that $\mu<\lambda$ and $\mu\le\min_{e\in E}\mu_{\max,e}$, where $\mu_{\max,e}$ denotes \eqref{eq:mumax-appendix} evaluated at the costs of edge $e$, so that the bound holds simultaneously on every edge. When $r$ is zero, this bound reduces to $\lambda/(2\alpha)$.

\paragraph{Continuous Differentiability of the Lasry--Lions Surrogate.}
We verify that $\mathrm{LL}^{\lambda,\mu}$ in \eqref{eq:combined_ll_derivation} is continuously differentiable on $\mathbb R$ by differentiating each branch directly and checking that values and derivatives match at $x_1$ and $x_2$. Since every branch depends on $x$ only through $|x|$, it suffices to check $x\ge0$; the negative side follows by evenness. At $x=0$, the quadratic branch $x^2/(2(\lambda-\mu))$ is the same expression on both sides of the origin, with derivative $x/(\lambda-\mu)\to0$ as $x\to0^\pm$, so differentiability at $0$ is immediate.

\emph{At $x_1$.} Differentiating the quadratic and middle branches gives
\begin{align*}
\frac{d}{dx}\frac{x^2}{2(\lambda-\mu)} &= \frac{x}{\lambda-\mu},\\
\frac{d}{dx}\left[\frac{T^2}{2\lambda}-\frac{(x-T)^2}{2\mu}\right] &= \frac{T-x}{\mu}.
\end{align*}
Using $x_1=(\lambda-\mu)T/\lambda$,
\begin{align*}
\frac{x_1}{\lambda-\mu} &= \frac{(\lambda-\mu)T/\lambda}{\lambda-\mu} = \frac{T}{\lambda},\\
\frac{T-x_1}{\mu} &= \frac{T-(\lambda-\mu)T/\lambda}{\mu}\\
&= \frac{T\mu/\lambda}{\mu} = \frac{T}{\lambda},
\end{align*}
so both derivatives equal $T/\lambda$ at $x_1$. The values match as well: since $x_1-T=T\big[(\lambda-\mu)-\lambda\big]/\lambda=-\mu T/\lambda$,
\begin{align*}
\frac{T^2}{2\lambda}-\frac{(x_1-T)^2}{2\mu}
&=\frac{T^2}{2\lambda}-\frac{\mu T^2}{2\lambda^2}\\
&=\frac{T^2}{2\lambda}\cdot\frac{\lambda-\mu}{\lambda}\\
&=\frac{(\lambda-\mu)T^2}{2\lambda^2}
=\frac{x_1^2}{2(\lambda-\mu)},
\end{align*}
the last step using $x_1^2=(\lambda-\mu)^2T^2/\lambda^2$. This matches the quadratic branch evaluated at $x_1$, so the two branches meet with equal value and equal slope.

\emph{At $x_2$.} The middle branch has derivative $(T-x)/\mu$ as above; differentiating the log-tail branch $\psi_2(x_2)+r(x-x_2)+\alpha b\log(x/x_2)$ gives
\[
\frac{d}{dx}\Big[r(x-x_2)+\alpha b\log(x/x_2)\Big] = r+\frac{\alpha b}{x}.
\]
Equality of the two derivatives at $x=x_2$, namely $(T-x_2)/\mu=r+\alpha b/x_2$, holds if and only if, after multiplying both sides by $\mu x_2$,
\begin{align*}
x_2(T-\mu r)-x_2^2-\mu\alpha b&=0\\
\Longleftrightarrow\quad x_2^2-Sx_2+\mu\alpha b&=0,
\end{align*}
using $S=T-\mu r$. This is exactly the defining quadratic solved for $x_2$ in \eqref{eq:x2_derivation}, so the derivative equality holds exactly, not merely in the limit. Value continuity at $x_2$ is immediate: the log-tail branch evaluates to $\psi_2(x_2)+r(x_2-x_2)+\alpha b\log(x_2/x_2)=\psi_2(x_2)$, matching the middle branch value at $x_2$ by definition of $\psi_2(x_2)$.

\emph{Degenerate case $x_1=x_2$.} When $b=0$, the middle interval collapses to a point, $x_1=x_2=(\lambda-\mu)r$ (\Cref{app:fixed-parameter-majorization}). Neither derivative computation above assumed $x_1<x_2$, so both identities still hold verbatim at this shared point: the quadratic, middle, and log-tail derivatives all equal $T/\lambda=r$ there, and the quadratic and log-tail branches meet directly with equal value and slope even though the middle branch has zero width.

Hence $\mathrm{LL}^{\lambda,\mu}$ is continuously differentiable everywhere, with
\[
\big(\mathrm{LL}^{\lambda,\mu}\big)'(x) = \operatorname{sign}(x)\,w(x)\,|x|, \qquad x\neq0,
\]
where $w(x)=(\mathrm{LL}^{\lambda,\mu})'(|x|)/|x|$ is the weight function of \eqref{eq:weight_ll} of the main paper, taken at $x=0$ by continuous extension.

\paragraph{Proof of Pointwise Convergence.}
Fix a path $\mu(\lambda)$ such
that, for every $\lambda>0$,
\begin{equation*}
0<\mu(\lambda)<\lambda,
\qquad
\mu(\lambda)\leq
\min_{e\in E}\mu_{\max,e}(\lambda),
\end{equation*}
where $\mu_{\max,e}(\lambda)$ denotes \eqref{eq:mumax-appendix} evaluated at
the costs of edge $e$. This is the admissible class
\eqref{eq:general-admissible-mu}; it is nonempty, since $T_e>0$ implies
$\mu_{\max,e}(\lambda)>0$, and it covers the implemented rule
$\mu(\lambda)=\min\{\lambda/2,\ \min_{e\in E}\mu_{\max,e}(\lambda)\}$ of
\Cref{sec:lambda_update} as a special case. Since $0<\mu(\lambda)<\lambda$, it follows that $\mu(\lambda)\to0$ as
$\lambda\downarrow0$. Let further
\[
\alpha(\lambda)=\frac{\lambda}{\lambda+s\lambda_0},
\]
where $s>0$ and $\lambda_0>0$ are fixed as in \Cref{sec:lasrylions}. Recall the standing assumptions $r,b\geq0$ and $r+b>0$. We show that
$\mathrm{LL}^{\lambda,\mu(\lambda)}(x)\to
J(x)=r|x|+b\,\mathbf 1\{|x|>0\}$ for every fixed $x\in\mathbb R$ as
$\lambda\downarrow0$. Summing over edges then gives
$J_{\mathrm{LL}}^{\lambda,\mu(\lambda)}(f)\to J_{\mathrm{true}}(f)$
for every fixed $f\in\mathcal F$.

\emph{Case $x=0$.} The origin always lies in the quadratic branch, and $\mathrm{LL}^{\lambda,\mu}(0)=0=J(0)$ for every $\lambda$, so the limit is immediate.

\emph{Transition points vanish.} Since $r,b\geq0$,
\begin{align*}
T(\lambda)&=\lambda r+\sqrt{2\lambda b}\longrightarrow0,\\
S(\lambda)&=(\lambda-\mu(\lambda))r+\sqrt{2\lambda b}.
\end{align*}
Since $\mu(\lambda) r \geq 0$ and $\lambda - \mu(\lambda) > 0$ together with $r+ b > 0$, we have
$0<S(\lambda)\leq T(\lambda)$. Hence $S(\lambda)\to0$ by the squeeze
theorem.
The admissibility condition guarantees that
$S^2-4\mu\alpha b\geq0$. Since $\mu\alpha b\geq0$, we also have
\[
\begin{aligned}
0&\leq S^2-4\mu\alpha b\leq S^2,\\
0&\leq\sqrt{S^2-4\mu\alpha b}\leq S.
\end{aligned}
\]
It follows directly from the definition of the larger root that
\[
\frac{S}{2}
\leq
x_2=\frac{S+\sqrt{S^2-4\mu\alpha b}}{2}
\leq S.
\]
Therefore $x_2(\lambda)\to0$ by the squeeze theorem. Finally, $0\leq x_1\leq x_2$, so $x_1(\lambda)\to0$ as well.

\emph{Case $x\neq0$, $b>0$.} Because $x_2(\lambda)\to0$, there is $\lambda^\ast>0$ with $x_2(\lambda)<|x|$ for all $0<\lambda<\lambda^\ast$, so $x$ lies in the log-tail branch throughout this range:
\[
\mathrm{LL}^{\lambda,\mu}(x)=\psi_2(x_2)+r(|x|-x_2)+\alpha b\log(|x|/x_2).
\]
We take each term to its limit.

The linear term satisfies $r(|x|-x_2)\to r|x|$ since $x_2\to0$.

For the logarithmic term, $S\geq\sqrt{2\lambda b}$ and
$x_2\geq S/2$ give $x_2\geq\sqrt{\lambda b/2}$. Moreover,
$0<\alpha\leq\lambda/(s\lambda_0)$. Hence, for all sufficiently small
$\lambda$,
\begin{align*}
0
&\leq \alpha b\log\!\left(\frac{|x|}{x_2}\right)\\
&\leq
\frac{\lambda b}{s\lambda_0}
\log\!\left(|x|\sqrt{\frac{2}{\lambda b}}\right)\\
&=
\frac{b}{s\lambda_0}
\left[
\lambda\log\!\left(|x|\sqrt{\frac{2}{b}}\right)
+\frac{\lambda}{2}\log\!\left(\frac{1}{\lambda}\right)
\right]\longrightarrow0,
\end{align*}
where the last limit uses $\lambda\log(1/\lambda)\to0$. The squeeze
theorem therefore gives $\alpha b\log(|x|/x_2)\to0$.

It remains to determine the limit of the junction value $\psi_2(x_2)$.
The defining relation $x_2^2-Sx_2+\mu\alpha b=0$ gives
$x_2(S-x_2)=\mu\alpha b$. Since $x_2>0$,
\begin{align*}
T-x_2&=(T-S)+(S-x_2)\\
&=\mu r+\frac{\mu\alpha b}{x_2}\\
&=\mu\left(r+\frac{\alpha b}{x_2}\right).
\end{align*}
The preceding lower bound on $x_2$ yields the explicit estimate
\[
0\leq\frac{\alpha b}{x_2}
\leq
\frac{2\alpha b}{\sqrt{2\lambda b}}
=\frac{\sqrt{2b\lambda}}{\lambda+s\lambda_0}
\leq\frac{\sqrt{2b\lambda}}{s\lambda_0}
\longrightarrow0.
\]
Thus $r+\alpha b/x_2\to r$ and, since $\mu\to0$,
\[
\frac{(x_2-T)^2}{2\mu}
=\frac{\mu}{2}\left(r+\frac{\alpha b}{x_2}\right)^2
\longrightarrow0.
\]
On the other hand, the exact identity
\[
\frac{T^2}{2\lambda}
=\frac{\lambda r^2}{2}+r\sqrt{2\lambda b}+b
\]
shows that $T^2/(2\lambda)\to b$. Hence
\[
\psi_2(x_2)=\frac{T^2}{2\lambda}-\frac{(x_2-T)^2}{2\mu}\ \longrightarrow\ b.
\]
Adding the three limits gives $\mathrm{LL}^{\lambda,\mu(\lambda)}(x)\to b+r|x|=J(x)$.

\emph{Case $x\neq0$, $b=0$.} Then $r>0$, and $x_1=x_2=(\lambda-\mu)r\to0$ since $0<\lambda-\mu<\lambda\to0$, so $x$ again eventually lies in the log-tail branch, where $\alpha b=0$ exactly. Here $S=x_2$ and $T=S+\mu r$, so
\begin{align*}
\psi_2(x_2)&=\frac{T^2}{2\lambda}-\frac{(\mu r)^2}{2\mu}\\
&=\frac{(\lambda-\mu)r^2}{2}\to0=b,
\end{align*}
and $r(|x|-x_2)\to r|x|$, so again $\mathrm{LL}^{\lambda,\mu(\lambda)}(x)\to r|x|=J(x)$.

In every case $\mathrm{LL}^{\lambda,\mu(\lambda)}(x)\to J(x)$ as $\lambda\downarrow0$, which proves the claim.

\subsection{Quadratic Majorization at Fixed Smoothing Parameters}
\label{app:fixed-parameter-majorization}

We next justify the majorization--minimization interpretation of the quadratic IRLS model. Throughout this subsection, the smoothing parameters $(\lambda,\mu)$ are fixed. We assume the standard nonnegative-cost setting $r_e,b_e\geq0$ and $r_e+b_e>0$ for every $e\in E$, and note that the implemented coefficient satisfies $0<\alpha\leq1$. On instance families where some arcs
have $r_e=b_e=0$ exactly, this assumption is violated as stated; the implementation floors $r_e$ at $10^{-12}$ before evaluating the surrogate, its weights, and $\mu_{\max,e}$ from \eqref{eq:mumax-appendix}, which restores $r_e+b_e>0$ for every edge and brings such arcs within the
scope of the proposition below. We take
\begin{equation}
\label{eq:general-admissible-mu}
0<\mu<\lambda,
\qquad
\mu\leq\min_{e\in E}\mu_{\max,e}(\lambda).
\end{equation}
The $\mu_{\max,e}$ cap makes the discriminant defining $x_{2,e}$ nonnegative.
To show that the transition points in \Cref{sec:lasrylions} of the main paper are also ordered,
set
\[
S_e=T_e-\mu r_e
=(\lambda-\mu)r_e+\sqrt{2\lambda b_e}>0
\]
and
\[
p_e(x)=x^2-(T_e-\mu r_e)x+\mu\alpha b_e.
\]
Since the discriminant is nonnegative, the root sum is $S_e>0$, and the root product is $\mu\alpha b_e\ge0$, both roots are real and nonnegative, and $x_{2,e}$ is the larger one.
If $b_e=0$, then $r_e>0$ and
$p_e(x)=x\bigl(x-(\lambda-\mu)r_e\bigr)$, so
$x_{1,e}=x_{2,e}=(\lambda-\mu)r_e$.
Suppose now that $b_e>0$, and set $q_e=\sqrt{2b_e/\lambda}$.
Direct substitution gives
\[
p_e(x_{1,e})
=\mu(\alpha b_e-q_ex_{1,e}).
\]
If $p_e(x_{1,e})\leq0$, then $x_{1,e}$ lies between the roots and hence
$x_{1,e}\leq x_{2,e}$. If $p_e(x_{1,e})>0$, then
\[
q_ex_{1,e}<\alpha b_e\leq b_e=\frac{\lambda q_e^2}{2},
\qquad\text{so}\qquad
x_{1,e}<\frac{\lambda q_e}{2}\leq\frac{S_e}{2}.
\]
Because $S_e/2$ is the vertex of the convex quadratic $p_e$, this places
$x_{1,e}$ to the left of the smaller root (or of the double root), and again
$x_{1,e}<x_{2,e}$. Finally, if
$\Delta_e=S_e^2-4\mu\alpha b_e\geq0$, then
\[
x_{2,e}=\frac{S_e+\sqrt{\Delta_e}}{2}
\leq S_e\leq T_e.
\]
Consequently, the capped domain gives
$0<x_{1,e}\leq x_{2,e}\leq T_e$ for every edge.

\begin{proposition}[Fixed-parameter quadratic majorization]
\label{prop:fixed-parameter-majorization}
Suppose that the stated cost assumptions and
\eqref{eq:general-admissible-mu} hold. Let $f^{(k)}\in\mathcal F$, let
$w_e^{(k)}=w_e(f_e^{(k)})$ be the exact weight given by
\eqref{eq:weight_ll} of the main paper, and let $f^{(k+1)}$ be the exact update (c.f. \eqref{eq:newf} of the main paper).
Then $W_k\succ0$, so
$\langle\cdot,\cdot\rangle_{W_k}$ is an inner product, and the model
$Q^{\lambda,\mu}(\,\cdot\mid f^{(k)})$ defined in
\eqref{eq:mm-majorizer-appendix} of the main paper is a global majorizer of the fixed-parameter
smoothed objective:
\begin{align}
J_{\mathrm{LL}}^{\lambda,\mu}(f)&\leq Q^{\lambda,\mu}(f\mid f^{(k)})
\quad\text{for every }f\in\mathbb R^m,
\label{eq:mm-global-bound-appendix}\\
Q^{\lambda,\mu}(f^{(k)}\mid f^{(k)})
&=J_{\mathrm{LL}}^{\lambda,\mu}(f^{(k)}).
\label{eq:mm-tangency-appendix}
\end{align}
The two functions also have the same gradient at $f^{(k)}$.
Consequently,
\begin{equation}
\label{eq:mm-descent-chain-appendix}
\begin{aligned}
J_{\mathrm{LL}}^{\lambda,\mu}(f^{(k+1)})
&\leq Q^{\lambda,\mu}(f^{(k+1)}\mid f^{(k)})\\
&\leq Q^{\lambda,\mu}(f^{(k)}\mid f^{(k)})
=J_{\mathrm{LL}}^{\lambda,\mu}(f^{(k)}).
\end{aligned}
\end{equation}
More precisely,
\begin{equation}
\label{eq:mm-sufficient-decrease-appendix}
\begin{aligned}
&J_{\mathrm{LL}}^{\lambda,\mu}(f^{(k)})
-J_{\mathrm{LL}}^{\lambda,\mu}(f^{(k+1)})\\
&\quad\geq
\frac{1}{2}
\left\langle
f^{(k+1)}-f^{(k)},f^{(k+1)}-f^{(k)}
\right\rangle_{W_k}.
\end{aligned}
\end{equation}
\end{proposition}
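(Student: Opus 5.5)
Everything is separable across edges, so I will reduce the statement to a one-dimensional claim per edge and then sum. For a fixed edge with weight $w_0=w(x_0)$ evaluated at $x_0=f^{(k)}_e$, I need
\[
\mathrm{LL}^{\lambda,\mu}(x)\le \mathrm{LL}^{\lambda,\mu}(x_0)+\tfrac{w_0}{2}(x^2-x_0^2)\quad\text{for all }x\in\mathbb R .
\]
Summing this over $e\in E$ gives \eqref{eq:mm-global-bound-appendix}. Tangency \eqref{eq:mm-tangency-appendix} holds by definition of $Q$. Gradient equality follows because $\partial_{f_e}Q = w_0 x_0 = (\mathrm{LL})'(x_0)$, using the identity $(\mathrm{LL}^{\lambda,\mu})'(x)=\operatorname{sign}(x)w(x)|x|$ proved above; at $x_0=0$ both sides vanish.

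Positivity $W_k\succ0$ I will check branchwise from \eqref{eq:weight_ll}. On the first branch, $1/(\lambda-\mu)>0$. On the middle branch, $|f|<x_2\le T$ by the ordering established just before the proposition, so $(T-|f|)/(\mu|f|)>0$. On the tail, $r/|f|+\alpha b/|f|^2>0$ since $r+b>0$ and $\alpha>0$.

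\textbf{Key step (main obstacle).} The one-dimensional majorization is the heart of the proof. I will reparametrize $t=|x|$ and write $\mathrm{LL}(x)=\phi(t^2)$, i.e. $\phi(s)=\mathrm{LL}(\sqrt s)$ for $s\ge0$. The standard IRLS fact is that if $\phi$ is concave on $[0,\infty)$, then the tangent line at $s_0=x_0^2$ majorizes $\phi$. That tangent line is exactly the right-hand side above, because $\phi'(s_0)=\mathrm{LL}'(t_0)/(2t_0)=w_0/2$, with continuous extension at $s_0=0$. Concavity of $\phi$ on $(0,\infty)$ is equivalent to $\mathrm{LL}'(t)/t=w(t)$ being nonincreasing in $t>0$, and $\phi$ is $C^1$ because $\mathrm{LL}$ is. So it suffices to show that $w$ is nonincreasing, which I will do branch by branch using the $C^1$ matching at $x_1,x_2$ (which makes $w$ continuous) to glue the pieces together:
\begin{itemize}
\item on $[0,x_1]$, $w$ is constant;
\item on $(x_1,x_2)$, $w(t)=T/(\mu t)-1/\mu$ is decreasing since $T>0$;
\item on $[x_2,\infty)$, $w(t)=r/t+\alpha b/t^2$ is nonincreasing since $r,b\ge0$.
\end{itemize}
The degenerate case $b=0$ (so $x_1=x_2$) needs no middle branch. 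The case $x_0=0$ is covered because $w$ extends continuously with value $1/(\lambda-\mu)$. The place where the admissibility condition \eqref{eq:general-admissible-mu} genuinely enters is the requirement that $x_2$ be real with $x_1\le x_2\le T$, which the preceding paragraph already provides.

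\textbf{Descent chain and sufficient decrease.} The descent chain \eqref{eq:mm-descent-chain-appendix} follows from majorization at $f^{(k+1)}$ together with the optimality of $f^{(k+1)}$ for $Q(\cdot\mid f^{(k)})$ over $\mathcal F\ni f^{(k)}$. For the sharper bound \eqref{eq:mm-sufficient-decrease-appendix}, I will use that $f^{(k+1)}$ is the $W_k$-orthogonal projection of $0$ onto the convex set $\mathcal F$. The variational inequality $\langle f^{(k+1)},f^{(k)}-f^{(k+1)}\rangle_{W_k}\ge0$ then gives
\[
\|f^{(k)}\|_{W_k}^2-\|f^{(k+1)}\|_{W_k}^2\ge \|f^{(k)}-f^{(k+1)}\|_{W_k}^2 .
\]
Hence $Q(f^{(k)}\mid f^{(k)})-Q(f^{(k+1)}\mid f^{(k)})\ge\tfrac12\|f^{(k+1)}-f^{(k)}\|_{W_k}^2$, and combining this with the majorization completes the proof.
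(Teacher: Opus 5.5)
Your proposal is correct and follows essentially the same proof as the paper: both reduce to concavity of $g_e(t)=\mathrm{LL}_e^{\lambda,\mu}(\sqrt t)$ followed by the tangent-line inequality, and your check that $w(t)=(\mathrm{LL}^{\lambda,\mu})'(t)/t$ is continuous and nonincreasing is simply a restatement of the paper's branchwise $g_e''\le 0$ combined with $C^1$ gluing at $x_1$ and $x_2$, since $g_e'(t)=w(\sqrt t)/2$. Your positivity check and your sufficient-decrease argument via the variational inequality $\langle f^{(k+1)},f^{(k)}-f^{(k+1)}\rangle_{W_k}\ge 0$ also match the paper's.
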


\begin{proof}
The ordering established above implies positivity of every exact analytical
weight. In the quadratic regime, $w_e(x)=1/(\lambda-\mu)>0$. In a nonempty
transition regime, $0<x<x_{2,e}\leq T_e$, so
$w_e(x)=(T_e-x)/(\mu x)>0$. In the active-flow regime,
$w_e(x)=r_e/x+\alpha b_e/x^2>0$ because $r_e+b_e>0$ and $\alpha>0$.
The weight at zero is defined by the quadratic branch. Hence $W_k\succ0$.

For each edge, define the scalar function
\begin{equation}
\label{eq:g-square-appendix}
g_e(t)=\mathrm{LL}_e^{\lambda,\mu}(\sqrt{t}),\qquad t\geq0.
\end{equation}
For $t>0$, differentiation of \eqref{eq:lasrylions} of the main paper gives
\begin{equation}
\label{eq:g-prime-appendix}
g_e'(t)=
\begin{cases}
\dfrac{1}{2(\lambda-\mu)},
&0<t<x_{1,e}^{2},\\[8pt]
\dfrac{T_e-\sqrt{t}}{2\mu\sqrt{t}},
&x_{1,e}^{2}<t<x_{2,e}^{2},\\[10pt]
\dfrac{r_e}{2\sqrt{t}}+\dfrac{\alpha b_e}{2t},
&t>x_{2,e}^{2}.
\end{cases}
\end{equation}
The values at the junctions are understood by continuous extension.
Inside the three respective regions,
\begin{equation}
\label{eq:g-second-appendix}
g_e''(t)=
\begin{cases}
0,
&0<t<x_{1,e}^{2},\\[3pt]
-\dfrac{T_e}{4\mu t^{3/2}},
&x_{1,e}^{2}<t<x_{2,e}^{2},\\[10pt]
-\dfrac{r_e}{4t^{3/2}}-\dfrac{\alpha b_e}{2t^2},
&t>x_{2,e}^{2}.
\end{cases}
\end{equation}
Since $r_e,b_e\geq0$, each expression in
\eqref{eq:g-second-appendix} is nonpositive. Moreover, the one-sided
derivatives agree at both transition points. Indeed,
\begin{equation}
\frac{T_e-x_{1,e}}{\mu x_{1,e}}
=\frac{1}{\lambda-\mu},
\end{equation}
while the defining equation
\begin{equation}
x_{2,e}^{2}-(T_e-\mu r_e)x_{2,e}+\mu\alpha b_e=0
\end{equation}
implies
\begin{equation}
\frac{T_e-x_{2,e}}{\mu x_{2,e}}
=\frac{r_e}{x_{2,e}}+\frac{\alpha b_e}{x_{2,e}^{2}}.
\end{equation}
If $x_{1,e}=x_{2,e}$, the middle interval is empty, and these identities
show that the derivatives of the quadratic and active-flow branches agree
directly at the common transition point. Value continuity follows from the
matched middle-branch value used to define the active-flow constant.
The right derivative at zero is
$g_e'(0)=1/[2(\lambda-\mu)]$. Hence $g_e$ is continuously
differentiable and concave on $[0,\infty)$.

The tangent inequality for a concave function now yields, for every
$f_e\in\mathbb R$,
\begin{align}
\mathrm{LL}_e^{\lambda,\mu}(f_e) =g_e(f_e^2)
&\leq
g_e\!\left((f_e^{(k)})^2\right)\nonumber\\
&\quad+
g_e'\!\left((f_e^{(k)})^2\right)
\left(f_e^2-(f_e^{(k)})^2\right).
\label{eq:edge-majorization-appendix}
\end{align}
By \eqref{eq:g-prime-appendix} and \eqref{eq:weight_ll} of the main paper,
\begin{equation}
2g_e'\!\left((f_e^{(k)})^2\right)=w_e^{(k)},
\end{equation}
where the first branch gives the same identity when $f_e^{(k)}=0$.
Summing \eqref{eq:edge-majorization-appendix} over the edges proves
\eqref{eq:mm-global-bound-appendix}. Substitution of $f=f^{(k)}$ gives
\eqref{eq:mm-tangency-appendix}, and
\[
\nabla_f Q^{\lambda,\mu}(f^{(k)}\mid f^{(k)})
=W_kf^{(k)}
=\nabla J_{\mathrm{LL}}^{\lambda,\mu}(f^{(k)}),
\]
which establishes first-order tangency.

It remains to prove the last claim. The terms in
\eqref{eq:mm-majorizer-appendix} of the main paper that are independent of $f$ do not
affect the minimizer, so \eqref{eq:newf} of the main paper is precisely the
IRLS weighted quadratic update. This gives the descent chain
\eqref{eq:mm-descent-chain-appendix}. Since $\mathcal F$ is convex,
optimality of $f^{(k+1)}$ for
$Q^{\lambda,\mu}(\,\cdot\mid f^{(k)})$ also gives
\[
\left\langle f^{(k+1)},
f^{(k)}-f^{(k+1)}\right\rangle_{W_k}\geq0.
\]
Expanding the quadratic difference therefore shows that
\[
\begin{aligned}
&Q^{\lambda,\mu}(f^{(k)}\mid f^{(k)})
-Q^{\lambda,\mu}(f^{(k+1)}\mid f^{(k)})\\
&\quad\geq
\frac{1}{2}
\left\langle
f^{(k+1)}-f^{(k)},f^{(k+1)}-f^{(k)}
\right\rangle_{W_k}.
\end{aligned}
\]
Combining this inequality with the majorization bound proves
\eqref{eq:mm-sufficient-decrease-appendix}.
\end{proof}

\paragraph{Idealized fixed-parameter convergence.}
If the exact update is iterated indefinitely at one fixed admissible pair
$(\lambda,\mu)$ satisfying \eqref{eq:general-admissible-mu},
then compactness of $\mathcal F$ and continuity of
$J_{\mathrm{LL}}^{\lambda,\mu}$ make the objective bounded below.
Summing \eqref{eq:mm-sufficient-decrease-appendix} gives
$\sum_k\|f^{(k+1)}-f^{(k)}\|^2_{W_k}<\infty$, so
$f^{(k+1)}-f^{(k)}\to0$ in the $W_k$-weighted norm, and standard MM
arguments imply that every accumulation point is stationary for
$J_{\mathrm{LL}}^{\lambda,\mu}$ over $\mathcal F$
\cite{Razaviyayn2013unified}. This idealized statement does not assert
convergence of a finite continuation stage or of the full parameter-varying
algorithm.

\paragraph{Numerical qualification.}
\Cref{prop:fixed-parameter-majorization} concerns a fixed admissible pair,
the exact analytical weights in \eqref{eq:weight_ll} of the main paper, and an exact solution
of the weighted quadratic subproblem. If a flow magnitude is floored or a
weight is clipped for numerical protection, the resulting quadratic need
not retain either global majorization or first-order tangency. With exact
weights, an inexact subproblem solve retains the descent conclusion whenever
$Q^{\lambda,\mu}(f^{(k+1)}\mid f^{(k)})
\leq Q^{\lambda,\mu}(f^{(k)}\mid f^{(k)})$. 
Because the implementation clips weights and solves the QP numerically, it
does not infer descent from the proposition. Instead it evaluates
$J_{\mathrm{LL}}^{\lambda,\mu}$ directly at the fixed smoothing pair after a
trial step and backtracks or rejects any step that fails the strict acceptance rule specified in \Cref{app:algorithmic_details}.

\paragraph{Scope with respect to continuation.}
The proposition is step-wise: it compares consecutive iterates using the
same fixed pair $(\lambda,\mu)$. It does not imply a comparison after a
continuation update. With the parameter dependence displayed explicitly,
the jump
\[
J_{\mathrm{LL}}^{\lambda_{k+1},\mu_{k+1}}(f^{(k+1)})
-J_{\mathrm{LL}}^{\lambda_k,\mu_k}(f^{(k+1)})
\]
has no fixed sign in general. Non-positivity of this quantity is desirable, but cannot be guaranteed as downstream effect of the particular shape of the Lasry--Lions functional $\mathrm{LL}^{\lambda,\mu}$ and the continuation scheme for $\lambda$ and $\mu$.

\subsection{Weighted-QP Solver}
\label{app:dualssn}

This section documents the weighted-QP solver of \Cref{sec:dual_ssn} in full: the dual semismooth Newton (SSN) scheme implemented, the empirical scaling of its inner iteration counts across problem sizes, and the primal-dual interior-point method (IPM) used as a fallback when SSN fails to converge.

\paragraph{Dual semismooth Newton.}

Each weighted-QP solve is initialized with the dual potential and active-set masks from the previous accepted outer iterate. On the first call, no such state is available, so the input dual potential is set to $y=0$ and the masks are left empty. The solver does not form a Newton system at this initial value: since $D(0)=0$, a literal Newton step from $y=0$ would make \eqref{eq:ssnstep} of the main paper degenerate. Instead, it first constructs a dual predictor from the current primal flow $f$, updates the corresponding active-set masks, and then applies the semismooth Newton correction.

\paragraph{Predictor.}
The free and bound-active arcs are first estimated directly from the current primal flow ($f$) by thresholding it against the lower and upper bounds, rather than from the dual potential ($y$). If the resulting free subgraph is component-wise balanced with respect to the residual demand induced by the bound-active arcs, the solver performs one Laplacian solve on that subgraph. The resulting primal--dual candidate is accepted without any semismooth Newton iterations if it is primal feasible and the recovered dual variables satisfy the sign conditions associated with the estimated free and bound-active sets.

\paragraph{Regularized correction.}
If the predictor is rejected or infeasible, the solver returns to the warm-started dual potential $y$ and applies semismooth Newton corrections. At each step, arcs are classified as free, bound-active, or near a kink, with the latter lying within a numerical tolerance of either bound. On connected components for which the free and bound-active partition is component-wise balanced, the solver uses the exact generalized derivative $D(y)$. Otherwise, it regularizes the derivative by adding a penalty $\eta/w_e$ on near-kink and bound-active arcs. The regularization parameter is initialized at $\eta=10^{-2}$ and increased adaptively if the linear system cannot be solved accurately, the computed direction does not reduce the dual residual, or the free-subgraph solve remains poorly converged. The regularized Newton matrix is again a graph Laplacian and therefore remains singular up to an additive constant on each connected component. For a component-wise compatible right-hand side, the Newton equation is understood modulo these gauges (equivalently, through a Laplacian pseudoinverse); the implementation mean-centers the right-hand side and returned potential, which fixes the gauge when the regularized conductance graph is connected. The regularization still provides an inexact direction when the unregularized free-subgraph system is degenerate.

\paragraph{Line search.}
Let $\phi(\beta)$ denote the concave dual objective restricted to the search
path $y+\beta\Delta y$. Its directional derivative is
\[
    \phi'(\beta)=\Delta y^\top R(y+\beta\Delta y).
\]
This scalar function is continuous, nonincreasing, and piecewise linear in
$\beta$. The solver accepts the full step if $\phi'(1)\geq 0$; otherwise, it
locates a zero of $\phi'$ in $(0,1)$ before updating the dual potential,
thereby avoiding overshooting an intervening kink.

\paragraph{Inner iteration counts.}

Each outer IRLS iteration issues one weighted-QP solve, and each dual semismooth Newton step inside that solve requires one PCG solve of a free-subgraph Laplacian system, using a freshly computed approximate-Cholesky factorization
as preconditioner, at cost $\widetilde O(m\log(1/\epsilon))$.
Writing $K$ for the number of outer iterations and $\bar s$ for the average
number of Newton steps per subproblem, the dominant cost of a single IRLS run is
$\widetilde O(K\,\bar s\,m\log(1/\epsilon))$. Since $K$ is capped at $200$
by construction, the practical question is how $\bar s$ behaves as $m$ grows.

\Cref{tab:newton_scaling} reports these counters for one Layer-1 run on an
instance from each of the nine \texttt{scale} families,
which span a $200$-fold range in $m$ and account for $270$ of the $410$ instances. All nine runs terminate normally, so the counts are complete. Over this range $\bar s$ grows slowly, from $1.00$ at $m=100$ to $2.91$ at $m=2\times10^{4}$, and never exceeds $3.59$; a least-squares fit gives $\bar s\approx-1.03+0.41\log m$, so a $200$-fold increase in $m$ costs less than a threefold increase in Newton steps per subproblem. Since $\bar s$ itself grows only logarithmically in $m$, the per-run cost over these instances is therefore close to $\widetilde O(K\,m)$, with the $\log(1/\epsilon)$ and $\log m$ factors both absorbed into the $\widetilde O$. The number of outer iterations is likewise small, between $12$ and $26$, so no run approaches the iteration cap. On every instance the number of Laplacian solves exceeds the number of Newton steps by exactly $K$: the warm-start predictor
contributes one additional solve per outer iteration, and no subproblem requires a restart beyond it.

These figures are for a single instance per family and for Layer 1 alone; Layers
2 and 3 invoke the same solver repeatedly, so their totals scale with the number
of restarts and restricted supports rather than with $\bar s$.

\begin{table}[t]
\centering
\caption{Inner iteration counts for one Layer-1 IRLS run per \texttt{scale}
family. $K$ is the number of outer iterations, which equals the number of
weighted-QP solves; $\bar s$ is the average number of Newton steps per solve;
Lapl.\ counts Laplacian solver calls.}
\label{tab:newton_scaling}
\small
\setlength{\tabcolsep}{5pt}
\begin{tabular}{rrrrrr}
\toprule
$n$ & $m$ & $K$ & Newton & $\bar s$ & Lapl. \\
\midrule
$20$      & $100$      & 12 & 12 & 1.00 & 24 \\
$60$      & $400$      & 12 & 17 & 1.42 & 29 \\
$120$     & $1{,}500$  & 20 & 23 & 1.15 & 43 \\
$150$     & $2{,}500$  & 15 & 31 & 2.07 & 46 \\
$200$     & $4{,}000$  & 17 & 61 & 3.59 & 78 \\
$250$     & $7{,}500$  & 21 & 53 & 2.52 & 74 \\
$300$     & $9{,}000$  & 26 & 57 & 2.19 & 83 \\
$500$     & $10{,}000$ & 21 & 65 & 3.10 & 86 \\
$1{,}000$ & $20{,}000$ & 22 & 64 & 2.91 & 86 \\
\bottomrule
\end{tabular}
\end{table}

\paragraph{Interior-point method formulation.}
When the dual SSN solver described above fails to converge to a flow-conservation residual below $10^{-6}$, the same weighted quadratic subproblem is instead solved by a primal-dual interior-point method (IPM), which shares the weighted graph-Laplacian structure of the SSN Newton systems. The weighted quadratic subproblem can be written as
\begin{equation}\label{eq:qp_subproblem_2_appendix}
    \begin{aligned}
\min_f \quad & \frac{1}{2} f^\top W f \\
\text{s.t.}\quad & Cf=d, \\ & 0 \leq f \leq c.
\end{aligned}
\end{equation}

To solve \eqref{eq:qp_subproblem_2_appendix}, we use a primal-dual interior-point method \cite{nocedal2006numerical}. Introducing dual variables \(y\) for the equality constraints and nonnegative dual variables \(s\) and \(q\) for the lower and upper bounds, respectively, the Lagrangian is
\begin{equation}\label{eq:lagrangian_appendix}
\mathcal{L}(f,y,s,q) = \frac{1}{2}f^\top W f -y^\top(Cf-d) - s^\top f - q^\top(c-f).
\end{equation}

The KKT residuals are
\begin{equation}\label{eq:kkt1_appendix}
r_b = Cf-d,
\end{equation}
and
\begin{equation}\label{eq:kkt2_appendix}
r_c = -C^\top y + Wf - s + q.
\end{equation}

The complementarity conditions are
\begin{equation}\label{eq:dual1_appendix}
f_e s_e = 0,
\end{equation}
and
\begin{equation}\label{eq:dual2_appendix}
(c_e-f_e)q_e = 0.
\end{equation}

In the interior-point method, \eqref{eq:dual1_appendix} and \eqref{eq:dual2_appendix} are relaxed as
\begin{equation}
f_e s_e \approx \beta,
\end{equation}
and
\begin{equation}
(c_e-f_e)q_e \approx \beta,
\end{equation}
where \(\beta>0\) is the barrier target. Then, we define the complementarity residuals as
\begin{align}
r_{c1} &= f\odot s - \beta\mathbf{1},\\
r_{c2} &= (c-f)\odot q - \beta\mathbf{1}.
\end{align}

Linearizing the perturbed KKT system and eliminating the slack directions yields the reduced system
\begin{equation}
\left(C\Theta C^\top\right)\Delta y = -C(\Theta \zeta)-r_b,
\end{equation}
where 
\begin{equation}
\zeta = -r_c - F^{-1}r_{c1} + (U-F)^{-1}r_{c2}
\end{equation} and 
\begin{equation}
\Theta = H^{-1}.
\end{equation}

The Newton system involves the diagonal matrix
\begin{equation}
H = W + F^{-1}S + (U-F)^{-1}Q,
\end{equation}
where
\begin{equation}
F=\operatorname{diag}(f),
S=\operatorname{diag}(s),
Q=\operatorname{diag}(q),
U=\operatorname{diag}(c).
\end{equation}

$C\Theta C^\top$ is a weighted graph Laplacian, so the method can use Laplacian solvers to accelerate
the subproblem solve.

Once \(\Delta y\) is computed, the remaining directions are recovered from
\begin{align}
\Delta f &= \Theta\left(\zeta + C^\top \Delta y\right),\\
\Delta s &= F^{-1}\left(-r_{c1} - S \Delta f\right),\\
\Delta q &= (U-F)^{-1}\left(-r_{c2} + Q \Delta f\right).
\end{align}
A step length is then selected to preserve strict positivity of the primal-dual variables, and the iterates are updated accordingly.

The overall method has a nested structure. The outer loop corresponds to the IRLS reweighting step, while the inner loop solves the resulting convex quadratic subproblem.

At outer iteration \(k\), given the current flow \(f^{(k)}\), we compute the edge weights
\begin{equation}
w_e^{(k)} = w_{\lambda_k,\mu_k}(f_e^{(k)}), \qquad \forall e\in E,
\end{equation}
using \eqref{eq:weight_ll} of the main paper. These weights define the quadratic model \eqref{eq:qp_subproblem_2_appendix}, 
which is then solved by the interior-point method to obtain \(f^{(k+1)}\). The surrogate parameters are subsequently updated, producing a new smoothed model for the next outer iteration.

Hence, the outer loop is responsible for approximating the original non-convex objective, while the inner loop computes the minimizer of the current quadratic model. For exact analytical weights in the capped domain, this model is the majorizer of \Cref{prop:fixed-parameter-majorization}; the clipped numerical model is safeguarded by direct surrogate-value acceptance.

\subsection{Algorithmic Details}
\label{app:algorithmic_details}

This section completes the specification of the IRLS core method of \Cref{sec:algorithm}: the support-refit mechanism, termination conditions, numerical clamping, the step-acceptance rule, stagnation tests, and candidate feasibility evaluation.

\paragraph{Support refit.}
The refit is attempted when $\rho_{\text{gap}}\ge10$ and $0<K_{\text{gap}}\le\max\{4n,1024\}$. Its threshold $\sqrt{f_\uparrow f_\downarrow}$ is the midpoint of the gap on a logarithmic scale. An attempt is skipped when the support repeats the previous one, and a cost model is discarded when the restricted network cannot route the demand.  We solve it under two cost models, $r_e$ and $r_e+b_e/\max\{f_e,1\}$, and keep the smaller $J_{\mathrm{true}}$: the former minimizes variable cost on $S$, while the latter rewards closing weakly loaded arcs and remains informative when $r\equiv0$. Both models use the same integer-cost network-simplex routine as the stage-boundary candidate. The result enters the best feasible solution only if it improves the incumbent by more than $\max\{10^{-9},10^{-6}|J^{\star}_{\mathrm{true}}|\}$, so the refit cannot degrade the reported objective. The final refit uses the visible support $\{e:|f_e|>\tau\}$ of the last iterate.

Each stage boundary additionally produces a full-graph feasible candidate, obtained by solving a min-cost flow with unit costs $r_e+b_e/\max\{f_e,1\}$ on the active arcs and $r_e+b_e/\max\{c_e,1\}$ elsewhere. This candidate is kept only if it is feasible and improves $J_{\mathrm{true}}$.

\paragraph{Termination.}
The run exits once $\rho_{\text{gap}}\ge100$, $0<K_{\text{gap}}\le\max\{4n,1024\}$, $K_{\text{gap}}$ has taken the same value on four consecutive iterations, and the best feasible solution has not improved for eight iterations. The separation required here is two decades rather than the factor of five needed to trust $\lambdagap$, because exit is irreversible while an inaccurate target only perturbs the rate of sharpening. The $1024$ floor keeps this trigger permissive on small networks, where $4n$ alone would be too restrictive; for larger graphs ($n>256$) the bound scales as $4n$, a small multiple of the $n-1$ arcs of a basic feasible flow. The run also stops when the surrogate fails to decrease for $15$ iterations with relative flow change below $10^{-4}$, when $\lambda$ reaches $\lambda_{\min}$, or at the limit of $200$ reweighting iterations or the time budget.
 
\paragraph{Clamping.}
The IRLS weights of \eqref{eq:weight_ll} of the main paper are clipped to $[10^{-12},10^{12}]$ before each solve, and the variable costs entering the surrogate, the weights, and $\mu_{\max,e}$ are floored at $r_e\leftarrow\max\{r_e,10^{-12}\}$. The floor matters on the pure fixed-charge families, for which $r\equiv0$ and several surrogate quantities would otherwise be degenerate. Objectives are evaluated with the unmodified costs, so the reported $J_{\mathrm{true}}$ is unaffected. As noted after \Cref{prop:fixed-parameter-majorization}, a clipped weight may forfeit global majorization, which is why acceptance of a step is tested directly rather than assumed.
 
\paragraph{Step acceptance.}
A trial iterate $\tilde f$ is evaluated using
$J_{\mathrm{LL}}^{\lambda_k,\mu_k}(\cdot)$ at the current parameter pair and accepted only if
\[
\begin{aligned}
    J_{\mathrm{LL}}^{\lambda_k,\mu_k}(\tilde f)&\le J_{\mathrm{LL}}^{\lambda_k,\mu_k}(f^{(k)})
\end{aligned}
\]
(the numerical tolerance mentioned in the main paper is set to $0$ in the implementation). Otherwise the step is backtracked along $f^{(k)}+\beta(\tilde f-f^{(k)})$ for $\beta\in\{\tfrac12,\tfrac14,\tfrac18,\tfrac1{16},\tfrac1{32}\}$, and the first $\beta$ satisfying the same test is taken. If none does, the step is rejected and $f^{(k+1)}=f^{(k)}$. Whenever $\beta<1$, the dual warm start is discarded and the semismooth Newton state is reinitialized to $y=0$ and $s=q=\mathbf 1$, because the retained active set no longer matches the accepted iterate.
 
\paragraph{Stagnation tests.}
Both counters use a relative threshold of the same form. The surrogate counter advances unless $J_{\mathrm{LL}}^{\lambda,\mu}$ decreases by more than $\max\{10^{-9},10^{-6}|J_{\mathrm{LL}}^{\lambda,\mu}(f^{(k)})|\}$. The true-objective counter advances unless a feasible candidate improves the best value by more than $\max\{10^{-9},10^{-6}|J^{\star}_{\mathrm{true}}|\}$.
 
\paragraph{Candidate evaluation.}
All candidates are scored on the visible flow obtained by setting to zero every arc with $f_e\le\tau=10^{-6}$. A candidate is feasible when the visible flow satisfies $\|Cf-d\|_\infty\le10^{-6}$, together with capacity and nonnegativity violations of at most $10^{-6}$, and only feasible candidates are retained. Applying the threshold before the feasibility test is what makes an interior iterate ineligible, and is the reason the stage-boundary candidates are needed. Those candidates are passed to an integer-cost network-simplex routine, so arc costs are scaled by $10^{6}$ and rounded. A candidate is skipped when the node balances are not integral to within $10^{-6}$ or do not sum to zero, which does not occur on the families reported here.

\subsection{Smoothing Continuation: Implementation Details}
\label{app:smoothing_details}
 
This section completes the specification of \Cref{sec:lambda_update}. \Cref{tab:smoothing_hparams} collects the constants of the schedule, which were held fixed across all reported runs. These constants were fixed once via manual tuning on a small subset of instances prior to the full $410$-instance run, rather than via a systematic search over a stated range of values.
 
\paragraph{Initialization.}
Let $f_{\mathrm{med}}$ be the median magnitude among initial flows exceeding one unit, and let $b_{\mathrm{med}}$ be the median positive fixed cost. The choice $\lambda=f_{\mathrm{med}}^{2}/(2b_{\mathrm{med}})$ places $T_e$ at $f_{\mathrm{med}}$ when $r_e$ is negligible. Starts that already carry combinatorial support information, namely those derived from MCF, DSSP, and ADSSP solutions and from restart or cached supports, begin one decade below this level in flow units, and therefore two decades below in $\lambda$ because $T_e\propto\sqrt{\lambda}$:
\[
\lambda_0=10^{-2}\max\left\{\frac{f_{\mathrm{med}}^{2}}{2b_{\mathrm{med}}},\;1\right\}.
\]

The active flows of the start then lie in the logarithmic tail, so that sharpening acts on the incumbent support rather than dissolving it. If fewer than two arcs carry flow, this scale is undefined and we use $\lambda_0=10^{-2}\max\{3J_0,1\}$ instead, where $J_0$ is the realized cost of the start. Diffuse quadratic-programming starts have no support worth preserving, and are deliberately over-smoothed with $\lambda_0=10^{2}\max\{\max_e|f^{(0)}_e|,1\}$. This places every arc in the quadratic head, so the first subproblem is an almost uniformly weighted projection onto $\mathcal F$. The initial ratio is $\mu_0=\min\{\theta_0\lambda_0,\min_e\mu_{\max,e}(\lambda_0)\}$ with $\theta_0=0.3$ and $\theta_0=0.7$ in the two cases. The cap is inactive for $\theta_0=0.3$ but can reduce the diffuse-start ratio below $0.7$; both choices satisfy the capped domain of \Cref{prop:fixed-parameter-majorization}. The special $\theta_0$ applies to the initial weighted-QP solve, after which every continuation update uses $\mu=\lambda/2$. The log-tail coefficient is anchored to the start through $\alpha=\lambda/(\lambda+s\lambda_0)$ with $s=10^{-2}$, so that $\alpha\approx0.99$ initially and $\alpha=1/2$ after two decades of continuation, past which the tail relaxes toward the exact Lasry--Lions envelope recovered as $\alpha\to0$.
 
\paragraph{Gap statistic.}
The sorted flow magnitudes are floored at $\lambda_{\min}$ before the ratios are formed, and we write $f_\uparrow=|f|_{(K_{\text{gap}})}$ and $f_\downarrow=|f|_{(K_{\text{gap}}+1)}$ for the magnitudes bracketing the gap. The representative fixed cost $b_{\mathrm{rep}}$ is the flow-weighted average of $b_e$ over the arcs within one octave of the gap, that is $|f_e|\in[f_\downarrow/2,\,2f_\uparrow]$ with $b_e>10^{-3}$, and the median positive $b_e$ when that set is empty. The gap is trusted, and $\lambdagap$ used, only when $\rho_{\text{gap}}\ge5$, $K_{\text{gap}}\le m/2$, and $K_{\text{gap}}$ has taken the same value on three consecutive iterations. The last two conditions reject a ratio that lies in the interior of a broad flow distribution.
 
\paragraph{Stage rule.}
A stage ends when the relative flow change falls below $10^{-3}$, when the line search rejects the step, or after six weighted-QP solves. At a stage boundary,
\[
\lambda^{+}=\max\left\{\min\{\lambda/4,\;\lambdagap\},\;\lambda/100\right\},
\]
while between boundaries a trusted target may cap the level by at most a factor of five, that is $\lambda^{+}=\max\{\min\{\lambda,\lambdagap\},\lambda/5\}$. After flooring at $\lambda_{\min}$, we set $\mu^{+}=\min\{\lambda^{+}/2,\min_e\mu_{\max,e}(\lambda^{+})\}$. The second bound is never active, since $\mu_{\max,e}(\lambda)\ge\lambda/2$ holds for every edge whenever $\alpha\le1$.

\begin{table}[t]
\centering
\caption{Constants of the smoothing continuation schedule.}
\label{tab:smoothing_hparams}
\small
\setlength{\tabcolsep}{5pt}
\begin{tabular}{@{}p{0.48\columnwidth}p{0.46\columnwidth}@{}}
\toprule
Quantity & Value \\
\midrule
Post-initialization $\theta=\mu/\lambda$ & $1/2$ \\
Initial-QP requested $\theta_0$ & $0.3$ / $0.7$ \\
$\lambda_0$, sparse start & $10^{-2}\max\{f_{\mathrm{med}}^{2}/(2b_{\mathrm{med}}),1\}$ \\
$\lambda_0$, dense start & $10^{2}\max\{\max_e|f^{(0)}_e|,1\}$ \\
$s$ (in $\alpha$) & $10^{-2}$ \\
$\lambda_{\min}$ & $10^{-9}$ \\
Stage decay & $\lambda/4$ \\
Stage clamp & $\lambda/100$ \\
Intra-stage clamp & $\lambda/5$ \\
Stage QP cap & $6$ solves \\
Stage-close flow change & $10^{-3}$ \\
Trust: $\rho_{\text{gap}}$, $K_{\text{gap}}$, stability & $5$, $m/2$, $3$ iters \\
Refit trigger: $\rho_{\text{gap}}$, $K_{\text{gap}}$ & $10$, $\max\{4n,1024\}$ \\
Refit support threshold & $\sqrt{f_\uparrow f_\downarrow}$ \\
Refit cost models & $r_e$ and $r_e+b_e/\max\{f_e,1\}$ \\
Exit: $\rho_{\text{gap}}$, $K_{\text{gap}}$, stability & $100$, $\max\{4n,1024\}$, $4$ iters \\
Exit patience & $8$ iters \\
Surrogate stagnation & $15$ iters, $10^{-4}$ \\
$b_{\mathrm{rep}}$ window & $[f_\downarrow/2,\,2f_\uparrow]$, $b_e>10^{-3}$ \\
Weight clip & $[10^{-12},10^{12}]$ \\
Variable-cost floor & $10^{-12}$ \\
Backtracking factors & $\tfrac12,\tfrac14,\tfrac18,\tfrac1{16},\tfrac1{32}$ \\
Activation threshold $\tau$ & $10^{-6}$ \\
Feasibility tolerance & $10^{-6}$ \\
Refit cost scaling & $10^{6}$ \\
Reweighting iteration limit & $200$ \\
\bottomrule
\end{tabular}
\end{table}

\subsection{Algorithmic Layers}
\label{app:algo_layers}

This section completes the specification of the three algorithmic layers of \Cref{sec:algo_layers}: initialization strategies for multi-start IRLS, the objective-driven perturbation-restart mechanism, and the implementation details of the anchor-union restricted search.

\subsubsection{Initialization Strategies}
\label{app:initializations}

The multi-start IRLS layer uses several feasible initial flows. Quadratic warm starts solve weighted least-squares flow problems over \(\mathcal F\).
The linear-cost-weighted quadratic flow initialization solves
\[
\min_{f\in\mathcal F}
\sum_{e\in E} r_e f_e^2.
\]
The quadratic initialization weighted by combined linear and capacity-amortized fixed cost solves
\[
\min_{f\in\mathcal F}
\sum_{e\in E}
\left(
r_e+\frac{b_e}{\max\{c_e,1\}}
\right)f_e^2.
\]

The min-cost-flow warm starts solve linearized flow problems with combined per-unit costs. One start uses total-flow-amortized fixed charges, while the capacity-aware start uses costs
\[
r_e+\frac{b_e}{\max\{c_e,1\}},
\]
which favors high-capacity arcs with low amortized fixed charge. We also include a DSSP-based start obtained from the slope-scaling iterate. All initializations are checked for feasibility, and duplicate supports are removed before launching IRLS.

\subsubsection{Objective-Driven Perturbation Restarts}
\label{app:perturbation_restarts}

Although the multi-start strategy improves robustness, each IRLS run still evolves locally from its initial support. Once the smoothing parameters become small, the surrogate strongly penalizes opening new arcs, and the method may continue improving the flow values without substantially changing the active arc set. To encourage larger support changes, we introduce an objective-driven perturbation restart layer.

Let \(f\) denote an incumbent feasible flow returned by IRLS. We define the active and inactive arc sets as
\[
A(f)=\{e\in E:f_e>\tau\},
\qquad
I(f)=\{e\in E:f_e\le \tau\},
\]
where \(\tau>0\) is the activation threshold used for scoring. Active arcs are ranked by their realized average cost
\[
\sigma_e^{\mathrm{act}}
=
r_e+\frac{b_e}{\max\{f_e,1\}},
\qquad e\in A(f),
\]
so arcs carrying little flow but incurring large fixed cost are treated as expensive. Inactive arcs are ranked by their capacity-amortized opening cost
\[
\sigma_e^{\mathrm{inact}}
=
r_e+\frac{b_e}{\max\{c_e,1\}},
\qquad e\in I(f),
\]
so arcs with low variable cost and low amortized fixed cost are treated as promising alternatives.

Using these scores, we generate perturbed target vectors that modify the incumbent support in several complementary ways. Weak-arc deletions reduce flow on active arcs carrying very small amounts. Expensive-arc deletions reduce or remove active arcs with large \(\sigma_e^{\mathrm{act}}\). Cheap-arc openings assign trial flow to inactive arcs with small \(\sigma_e^{\mathrm{inact}}\). Rerouting perturbations simultaneously suppress expensive active arcs and open cheap inactive arcs. Support swaps replace weak active arcs with a small set of cheap inactive alternatives.

A perturbed target \(\tilde f\) is not required to satisfy flow conservation or capacity constraints. For example, opening a cheap inactive arc does not guarantee that a complete feasible route exists through that arc. Therefore, the target is not accepted directly. Instead, it is projected onto the feasible capacitated-flow polytope
\[
\mathcal F=\{g\in\mathbb{R}^{|E|}:Cg=d,\;0\le g\le c\}.
\]
The projected restart point is obtained from
\[
\Pi_{\mathcal F}(\tilde f)
=
\arg\min_{g\in\mathcal F}
\|g-\tilde f\|_2^2.
\]
The restart initialization is then \(f_{\mathrm{restart}}^{(0)}=\Pi_{\mathcal F}(\tilde f)\). This projection allows the perturbation step to be aggressive at the support level while restoring feasibility through a controlled continuous optimization problem.

If the projection fails, produces only a negligible change from the incumbent, or duplicates a support already generated by another perturbation, the candidate is discarded. The perturbed targets are first ranked by their realized objective value and support size. After projection and duplicate filtering, IRLS is rerun from the surviving restart candidates, and the best resulting flow is selected lexicographically by objective value, feasibility residual, and runtime.

\subsubsection{Layer-3 Implementation Details}
\label{app:layer3_details}

This section details the anchor-union restricted search of \Cref{sec:algo_layers}: how anchor supports are generated, how candidate unions are formed and filtered, the restricted refit portfolio, restricted IRLS runs, runtime control, within-union refinement, and final candidate selection.

\paragraph{Anchor generation.}
The anchor set \(\mathcal X_{\mathrm{anc}}\) is generated using ADSSP runs with four initializations:
\texttt{max\_flow}, \texttt{unitflow}, \texttt{trans}, \texttt{multi\_unit}. Each anchor run is limited to \(120\) iterations. Only feasible returned flows are retained as anchors.

\paragraph{Union supports.}
For each restart-layer candidate \(f\), the implementation constructs three types of restricted supports: $S(f)$,
$S(f)\cup S(a)$ for each $a\in\mathcal X_{\mathrm{anc}}$, and $S(f)\cup\bigcup_{a\in\mathcal X_{\mathrm{anc}}}S(a)$.

Duplicate and empty supports are removed. A support is also discarded if the
restricted directed graph fails a reachability screening test from supply nodes
to demand nodes.

\paragraph{Restricted refit portfolio.}
For each retained support \(E'\), feasible warm starts are obtained from two sources: the corresponding Layer 2 candidate flow restricted to \(E'\), and restricted linear flow problems
\begin{equation}
\label{eq:app_l3_refit}
\begin{aligned}
\min_{g}\quad
&
\sum_{e\in E'}
\left(
r_e+\gamma\frac{b_e}{\max\{c_e,1\}}
\right)g_e,\\
\mathrm{s.t.}\quad
&
C_{E'}g=d,\\
&
0\leq g_e\leq c_e,\qquad e\in E',
\end{aligned}
\end{equation}
with
\[
\gamma\in\{0,\;0.05,\;0.25,\;1,\;4\}.
\]
The case \(\gamma=0\) gives a pure variable-cost refit, while positive values
penalize arcs with large fixed cost relative to capacity. These objectives are used only to generate feasible starts; all candidates, including the restricted Layer 2 flows, are scored by \(J_{\mathrm{true}}\). If no feasible refit is found, the support is discarded. Among all feasible starts, duplicate numerical supports are removed and at most the three best starts are retained, ordered by \(J_{\mathrm{true}}\) and then by support size.

\paragraph{Restricted IRLS runs.}
Restricted IRLS uses the same smoothing schedule, weighted-QP solver,
support-refit rule, feasibility tolerance, and stopping criteria as the main
IRLS method.  The returned restricted flow is
lifted to the original graph by assigning zero flow to arcs outside \(E'\).

\paragraph{Support selection and runtime control.}
Candidate supports are ranked by the true objective value of their best feasible
refit, with support size used as a secondary criterion. To control runtime, only
a bounded number of supports are processed:
\[
N_{\mathrm{union}}
=
\begin{cases}
2, & n\ge4000\ \text{or}\ m\ge200000,\\
3, & n\ge1500\ \text{or}\ m\ge75000,\\
6, & \text{otherwise}.
\end{cases}
\]
The remaining Layer-3 time budget is divided among the selected supports.

\paragraph{Within-union refinement.}
After restricted IRLS, the algorithm performs a monotone support refinement
inside \(E'\). Active arcs are ranked for deletion using
\begin{equation}
\label{eq:app_l3_delete_score}
q_e^{\mathrm{del}}
=
r_e
+
\frac{b_e}{\max\{f_e,10^{-6}\}}
+
\frac{1}{2}\frac{b_e}{\max\{c_e,1\}}.
\end{equation}
Deletion batches of approximately \(12\%\) and \(6\%\) of the active support
are tested first, followed by individual trials among the \(25\) highest-ranked
active arcs. At most five successful deletion rounds are performed.

Unused arcs in \(E'\) are ranked for reopening using
\begin{equation}
\label{eq:app_l3_add_score}
q_e^{\mathrm{add}}
=
r_e+\frac{b_e}{\max\{c_e,1\}}.
\end{equation}
The algorithm tests bundles of up to \(8\), \(4\), and \(2\) arcs, followed by
individual trials among the \(20\) highest-ranked unused arcs. At most three
successful reopening rounds are performed.

Each deletion or reopening trial is followed by the restricted refit portfolio
in \eqref{eq:app_l3_refit}. A modification is accepted only if the refit is
feasible and strictly decreases \(J_{\mathrm{true}}\). Thus, this refinement
phase is monotone with respect to the reported fixed-charge objective.

\paragraph{Final selection.}
Layer 3 returns the feasible candidate with the smallest \(J_{\mathrm{true}}\)
among the restart-layer candidates, restricted refits, restricted IRLS outputs,
and within-union refinement outputs.

\subsection{Experiments}\label{app:experiment_setup}
This section gives the hardware and software environment, random seeds, and instance-generation parameters underlying the results in \Cref{sec:experiments}.

\paragraph{Hardware and Software.}
All experiments were run on a MacBook Pro (Apple M4 Pro, 48\,GB RAM) under macOS Sequoia \texttt{15.7.5}. Methods were implemented in Python \texttt{3.14.3}. Min-cost-flow and MILP subproblems use OR-Tools \texttt{9.15.6755}. The MILP baseline is formulated through the OR-Tools \texttt{pywraplp} interface and solved with its \texttt{SAT} backend. For consistency across methods, reported objective values are obtained by re-evaluating the returned flow under the original fixed-charge objective. Reported times are wall-clock, single-run, on an otherwise idle machine.

\paragraph{Random seeds.}
All reported heuristic runs use a master seed of $0$ (default). Per-initialization seeds within a run are derived deterministically from the master seed (e.g.\ offsets of the form $\mathrm{seed}+\mathrm{idx}$ for multi-start IRLS and ADSSP initializations), so a fixed master seed reproduces the full 410-instance run exactly. 

\paragraph{Instance generation.}
The \texttt{synthetic} family is generated by a parametric single-commodity generator with per-family specifications of node count, arc density, supply/demand node counts, capacity factor, and fixed-cost ratio, drawn with \texttt{numpy}'s \texttt{default\_rng(seed)} for reproducibility. The ten synthetic subfamilies vary these parameters (e.g.\ \texttt{high\_fixed\_cost}, \texttt{low\_fixed\_cost}, \texttt{tight\_capacity}) to probe different regimes of the fixed-charge-to-variable-cost ratio and capacity tightness, with $10$ instances drawn per subfamily. The \texttt{scale\_*} families follow the generation procedure of \cite{yang2024sequential}. \texttt{FCNetLib\_fc} and \texttt{Benchmark} are fixed, externally sourced instance sets and are not randomly generated.

\subsection{Additional Experimental Results}
\label{app:results}
This section supports \Cref{sec:experiments} with the full per-family breakdown, the split by MILP termination status, and the anchor-only ablation referenced there.
\begin{table*}[t]
\centering
{
\caption{Mean gap (\%) to the MILP baseline and mean time (s) by
family.  The best heuristic mean gap per family is bold.  A negative entry
indicates L3 improves on a time-limited MILP incumbent.}
\label{tab:familygaps}
\scriptsize
\setlength{\tabcolsep}{1.4pt}
\begin{tabular}{lrrrrrrrrrrrrrr}
\toprule
 & & \multicolumn{6}{c}{Mean gap (\%)} & \multicolumn{7}{c}{Time (s)} \\
\cmidrule(lr){3-8}\cmidrule(lr){9-15}
Family & \# instances & L3 & ADSSP & DSSP & L2 & L1 & MCF & MILP & L3 & L2 & L1 & ADSSP & DSSP & MCF \\
\midrule
FCNetLib\_fc & 20 & \textbf{6.972} & 12.881 & 12.947 & 12.227 & 17.312 & 39.067 & 2.33 & 25.53 & 19.11 & 5.10 & 0.03 & 0.04 & 0.01 \\
Benchmark & 20 & \textbf{7.642} & 9.462 & 10.270 & 11.934 & 13.363 & 21.392 & 155.96 & 10.15 & 5.54 & 3.74 & 0.15 & 0.63 & 0.03 \\
synthetic & 100 & \textbf{1.706} & 4.766 & 6.089 & 2.688 & 4.359 & 10.979 & 75.85 & 7.30 & 3.17 & 0.89 & 0.15 & 0.78 & 0.04 \\
sc.20\_100 & 30 & \textbf{0.014} & 0.378 & 0.550 & 0.132 & 0.406 & 0.627 & 0.05 & 0.30 & 0.12 & 0.04 & 0.01 & 0.01 & 0.00 \\
sc.60\_400 & 30 & \textbf{0.348} & 1.486 & 1.661 & 0.776 & 1.262 & 3.678 & 1.97 & 1.40 & 0.45 & 0.14 & 0.03 & 0.03 & 0.01 \\
sc.120\_1500 & 30 & \textbf{0.555} & 1.462 & 2.696 & 1.049 & 1.785 & 4.546 & 109.24 & 4.20 & 1.63 & 0.43 & 0.11 & 0.20 & 0.03 \\
sc.150\_2500 & 30 & \textbf{0.809} & 1.816 & 2.850 & 1.291 & 2.050 & 5.148 & 173.13 & 6.28 & 1.99 & 0.52 & 0.18 & 0.41 & 0.05 \\
sc.200\_4000 & 30 & \textbf{0.487} & 1.285 & 2.548 & 0.939 & 1.600 & 4.245 & 209.38 & 10.47 & 3.33 & 0.89 & 0.32 & 0.89 & 0.08 \\
sc.250\_7500 & 30 & \textbf{0.300} & 0.827 & 2.591 & 0.938 & 1.421 & 4.375 & 244.92 & 17.69 & 6.87 & 1.80 & 0.53 & 2.08 & 0.14 \\
sc.300\_9000 & 30 & \textbf{0.129} & 0.477 & 2.090 & 0.609 & 0.999 & 3.667 & 257.96 & 22.04 & 8.62 & 2.31 & 0.65 & 2.69 & 0.17 \\
sc.500\_10000 & 30 & \textbf{0.199} & 1.020 & 2.613 & 0.778 & 1.325 & 4.949 & 324.28 & 34.78 & 10.02 & 2.50 & 0.84 & 2.82 & 0.19 \\
sc.1000\_20000 & 30 & \textbf{-0.280} & 0.519 & 2.446 & 0.317 & 0.912 & 5.356 & 301.56 & 99.92 & 21.96 & 5.84 & 1.78 & 6.82 & 0.39 \\
\bottomrule
\end{tabular}
}
\end{table*}

\begin{table*}
\centering
\caption{Mean/median gap to the MILP baseline (\%) and mean time (s)
by MILP termination status.}
\label{tab:strata}
\small
\setlength{\tabcolsep}{3.4pt}
\begin{tabular}{lrrrrrr}
\toprule
 & \multicolumn{3}{c}{optimal ($n{=}235$)} &
 \multicolumn{3}{c}{\shortstack{time-limited\\incumbent} ($n{=}175$)} \\
\cmidrule(lr){2-4}\cmidrule(lr){5-7}
Method & mean & med. & time & mean & med. & time \\
\midrule
MILP  & 0.00 & 0.000 & 22.76 & 0.00 & 0.000 & 309.00 \\
\midrule
L3    & \textbf{1.48} & \textbf{0.014} & 6.76 & \textbf{1.10} & \textbf{0.329} & 32.96 \\
L2    & 2.46 & 0.139 & 3.42 & 2.17 & 1.069 & 9.47 \\
L1    & 3.69 & 0.613 & 1.04 & 3.06 & 1.683 & 2.61 \\
ADSSP & 3.50 & 0.386 & 0.12 & 2.17 & 0.883 & 0.71 \\
DSSP  & 3.89 & 0.694 & 0.24 & 4.35 & 3.347 & 2.94 \\
MCF   & 8.72 & 2.332 & 0.03 & 7.75 & 6.908 & 0.17 \\
\bottomrule
\end{tabular}
\end{table*}

\begin{table*}[t]
\centering
\caption{Layer-ablation performance over all $410$ instances.
Gaps are relative to the MILP baseline on each instance; win/tie rates are relative to the best displayed heuristic.}
\label{tab:adssp_ablation}
\small
\setlength{\tabcolsep}{5pt}
\begin{tabular}{lrrrr}
\toprule
Method
& Mean gap (\%)
& Median gap (\%)
& Win/tie rate (\%)
& Mean time (s) \\
\midrule
L3              & \textbf{1.316} & \textbf{0.131} & \textbf{87.6} & 18.001 \\
L3$^-$ & 2.073          & 0.560          & 29.0          & 6.523 \\
L2              & 2.334          & 0.749          & 23.7          & 6.052 \\
L1              & 3.420          & 1.412          & 14.4          & 1.781 \\
ADSSP anchors   & 2.113          & 0.384          & 30.7          & 0.620 \\
ADSSP           & 2.891          & 0.695          & 18.3          & \textbf{0.346} \\
\bottomrule
\end{tabular}
\end{table*}

L3 attains the best objective quality consistently across dataset families, including the larger-scale instances where the MILP baseline often terminates without proving optimality. A family-wise breakdown is given in \Cref{tab:familygaps}.

The MILP status split in \Cref{tab:strata} further supports the conclusion that the advantage of L3 is not limited to instances where the MILP solves easily. L3 remains the strongest heuristic both on instances where the MILP proves optimality and on instances where the MILP terminates with only a feasible incumbent. On the $235$ instances where the MILP proves optimality, L3 has mean and median gaps of \(1.48\%\) and \(0.014\%\), respectively. On the \(175\) instances where the MILP terminates with a time-limited incumbent, L3 has mean and median gaps of \(1.10\%\) and \(0.329\%\). Thus, the proposed method performs well both when the MILP reference is certified and when exact search is unable to close the gap within the time limit.

\Cref{tab:adssp_ablation} isolates the anchor-union contribution with two ablations. Note that the win/tie rates in this table are computed over a different method pool than \Cref{tab:heuristic_ablation} of the main paper (L3, L3$^-$, L2, L1, ADSSP anchors, and ADSSP here, versus L3, L2, L1, ADSSP, DSSP, and MCF there), so L3's $87.6\%$ rate here is not directly comparable to the $90.0\%$ reported in the main paper. Running ADSSP restricted to the same anchor supports that L3 uses reaches a mean gap of $2.113\%$ with a $30.7\%$ win/tie rate, well short of L3, showing that L3 is not simply returning a good ADSSP solution. L3$^-$, which corresponds to L3 with the anchor set restricted to $\varnothing$, i.e., L3 without using any ADSSP anchors, but keeping the union search, restricted refits, and add/drop refinement, reaches a mean gap of $2.073\%$, a $0.757$-percentage-point degradation from full L3. Since $\mathcal A^+$ in \Cref{alg:anchor_union_irls} (main paper) already contains $\varnothing$, L3 is never worse than this restricted variant by construction, and is strictly better on $278/410$ instances. Together, the two ablations show that neither the ADSSP anchor supports nor the IRLS-based union search alone accounts for L3's full gain: the two mechanisms are complementary and contribute comparably. Notably, L3$^-$ already outperforms the standard ADSSP baseline itself, winning or tying head-to-head on $280/410$ instances and reducing the mean gap by $0.818$ percentage points, so L3's advantage does not depend on borrowing ADSSP's solution quality even indirectly.

\end{document}